\DeclareMathOperator{\Coeff}{Coeff}
\DeclareMathOperator*{\Res}{Res}
\newcommand\note[1]{\mbox{}\marginpar{ \scriptsize\raggedright
\hspace{1pt}\color{red} #1}}
\numberwithin{equation}{section}
\numberwithin{equation}{subsection}
\theoremstyle{plain}
\newtheorem{theorem}[equation]{Theorem}
\newtheorem{lemma}[equation]{Lemma}
\newtheorem{proposition}[equation]{Proposition}
\newtheorem{corollary}[equation]{Corollary}
\newtheorem{thm}[equation]{Theorem}
\newtheorem{cor}[equation]{Corollary}
\theoremstyle{definition}
\newtheorem{example}[equation]{Example}
\newtheorem{remark}[equation]{Remark}
\newtheorem{definition}[equation]{Definition}
\def\C{\mathbb C}
\def\Q{\mathbb Q}
\def\Z{\mathbb Z}
\def\im{{\rm Im}}
\newcommand{\calv}{{\mathcal V}}
\newcommand{\calt}{{\mathcal T}}
\newcommand{\cali}{{\mathcal I}}
\newcommand{\calO}{{\mathcal O}}
\newcommand{\calS}{{\mathcal S}}
\newcommand{\calL}{\mathcal{L}}
\newcommand{\tX}{\widetilde{X}}
\newcommand{\cX}{{\mathcal X}}
\newcommand{\cO}{{\mathcal O}}
\newcommand{\bP}{{\mathbb P}}
\newcommand*{\linebundle}{\mathcal{L}}
\newcommand{\bC}{{\mathbb C}}
\newcommand{\cF}{{\mathcal F}}
\newcommand{\eca}{{\rm ECa}}
\newcommand{\pic}{{\rm Pic}}
\newcommand{\m}{\mathfrak{m}}\newcommand{\fr}{\mathfrak{r}}
\newcommand{\mfl}{\mathfrak{L}}
\newcommand{\bt}{{\mathbf t}}
\newcommand{\bZ}{{\mathbb{Z}}}
\newcommand{\bQ}{{\mathbb{Q}}}
\author{J\'anos Nagy}
\address{Central European University, Dept. of Mathematics,  Budapest, Hungary}
\email{nagy\textunderscore janos@phd.ceu.edu}
\title{Invariants of relatively generic surface singularities  II. Images of Abel maps}
\begin{document}

\keywords{normal surface singularities, links of singularities,
plumbing graphs, rational homology spheres, Abel maps, Poincar\'e series, generic analytic structures, periodic constant}

\subjclass[2010]{Primary. 32S05, 32S25, 32S50, 57M27
Secondary. 14Bxx, 14J80, 57R57}

\begin{abstract}

In \cite{R} the author investigated invariants of relatively generic structures on surface singularities generalising results of \cite{NNA1} and \cite{NNA2} about generic analytic structures and generic line bundles to the case of the relative setup, where we fix a given analytic type or line bundle on a smaller subgraph or more generally
on a smaller cycle and we choose a relatively generic line bundle or analytic type on the large cycle and managed to compute some of it's invariants, like geometric genus or $h^1$ of natural
line bundles.

In \cite{NNAD} the authors investigated the images of Abel maps $c^{l'}(Z) : \eca^{l'}(Z) \to \pic^{l'}(Z)$, where $l' \in - S'(|Z|)$, especially the dimensions of the images of
these maps and gave two algorithms to compute these invariants from cohomology numbers of cycles and from periodic constants of singularities we get from $\tX$ by blowing it up
at generic points sequentially.
Furthemore in \cite{NNAD} the authors gave explicit combinatorial formulas in the case of generic singularities.

In this paper we want to generalise the theorems from \cite{NNAD} to the relatively generic case.
In this case we fix a subsingularity $\tX_1$ for a subgraph $\mathcal{T}_1 \subset \mathcal{T}$ and a relatively generic singularity $\tX$ corresponding to $\tX_1$. 
Furthermore we fix a line bundle $\calL$ on $\tX_1$ and a Chern class $l' \in - S'$, such that $c^1(\calL)  = R(l')$.
Our main goal in the article is to compute $\dim(c^{l'}(\eca^{ l', \calL}(Z)))$ from invariants of the subsingularity $\tX_1$ and to conclude a few corollaries.

\end{abstract}

\maketitle

\linespread{1.2}


\pagestyle{myheadings} \markboth{{\normalsize  J. Nagy}} {{\normalsize Line bundles}}


\section{Introduction}\label{s:intr}

In \cite{R} the author investigated invariants of relatively generic structures on surface singularities generalising results of \cite{NNA1} and \cite{NNA2} about generic analytic structures and generic line bundles to the case of the relative setup, where we fix a given analytic type or line bundle on a smaller subgraph or more generally
on a smaller cycle and we choose a relatively generic line bundle or analytic type on the large cycle.

In \cite{NNAD} the authors investigated the images of Abel maps $c^{l'}(Z) : \eca^{l'}(Z) \to \pic^{l'}(Z)$, where $l' \in - S'(|Z|)$, especially the dimension of the images of
these maps and gave two algorithms to compute these invariants from cohomology numbers of cycles and from periodic constants of singularities we get from $\tX$ by blowing it up
at generic points sequentially, namely one of the main theorems was the following:

\begin{theorem}\label{dim}
Let's fix an arbitrary singularity with resolution $\tX$, a Chern class $l' \in - S'$ and an effective cycle $Z\geq E$, then one has:
\begin{equation}  \label{eq:form3}
\dim(\im(c^{l'}(Z))) = \min_{0\leq Z_1 \leq Z}\{\, (l', Z_1) + h^1(\calO_Z) - h^1(\calO_{Z_1})\, \}.
\end{equation}
\end{theorem}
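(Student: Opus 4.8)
\emph{Reduction to the cohomology of a generic bundle.} My first step is to express $\dim(\im(c^{l'}(Z)))$ through the generic fibre of the Abel map. Recall that $\pic^{l'}(Z)$ is an affine space whose tangent space at every point is canonically $H^1(\calO_Z)$. Fix a divisor $D \in \eca^{l'}(Z)$ with image $\calL = c^{l'}(Z)(D)$ and consider the structure sequence
\[
0 \to \calO_Z \to \calL \to \calL|_D \to 0 .
\]
A standard computation identifies the image of the differential of $c^{l'}(Z)$ at $D$ with the image of the connecting homomorphism $\delta \colon H^0(\calL|_D) \to H^1(\calO_Z)$. Since $\calL|_D$ is supported on a finite scheme, $H^1(\calL|_D)=0$, and the tail of the long exact sequence gives $\im(\delta)=\Ker\big(H^1(\calO_Z)\to H^1(\calL)\big)$, so that $\dim \im(\delta) = h^1(\calO_Z) - h^1(\calL)$. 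As $\eca^{l'}(Z)$ is irreducible, the rank of the differential is maximal on a dense open set, and by upper semicontinuity of $h^1$ this maximal rank is attained precisely where $h^1(\calL)$ is minimal, i.e. on a generic bundle $\calL_{gen}$ of the image. Hence $\dim(\im(c^{l'}(Z))) = h^1(\calO_Z) - h^1(\calL_{gen})$, and \eqref{eq:form3} becomes equivalent to the identity
\[
h^1(\calL_{gen}) = \max_{0 \le Z_1 \le Z}\{\, h^1(\calO_{Z_1}) - (l',Z_1)\,\}.
\]

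\emph{The lower bound on $h^1(\calL_{gen})$.} This direction gives the inequality ``$\geq$'' in \eqref{eq:form3}. Fix $0\le Z_1\le Z$. The restriction sequence $0\to \calL_{gen}(-Z_1)|_{Z-Z_1}\to \calL_{gen} \to \calL_{gen}|_{Z_1}\to 0$ yields a surjection $H^1(\calL_{gen})\twoheadrightarrow H^1(\calL_{gen}|_{Z_1})$, so $h^1(\calL_{gen})\ge h^1(\calL_{gen}|_{Z_1})$. Choosing a generic representing divisor $D$, its defining section does not vanish along the components of $Z_1$, so its restriction survives and $h^0(\calL_{gen}|_{Z_1})\ge h^0(\calO_{Z_1})$ (checked on connected components). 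Riemann--Roch on $Z_1$ in the form $\chi(\calL_{gen}|_{Z_1}) = \chi(\calO_{Z_1})+(l',Z_1)$ then gives $h^1(\calL_{gen}|_{Z_1}) \ge h^1(\calO_{Z_1})-(l',Z_1)$, and maximising over $Z_1$ proves ``$\geq$''.

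\emph{Achievability and the main obstacle.} The reverse inequality is the heart of the matter: one must show that for the generic member of the image $h^1$ actually drops to the combinatorial maximum. My plan is to proceed by induction along a Laufer-type computation sequence $0=W_0<W_1<\cdots<W_t=Z$ that adds one exceptional component $E_{v_{i+1}}$ at a time, and to control the connecting maps $\delta_i \colon H^0(\calL_{gen}|_{W_i}) \to H^1\big(\calL_{gen}(-W_i)|_{E_{v_{i+1}}}\big)$. From the associated sequences one has $h^1(\calL_{gen}|_{W_{i+1}}) = h^1(\calL_{gen}|_{W_i}) + \dim\coker(\delta_i)$, so $h^1$ is non-decreasing along the sequence, and it grows by the least possible amount exactly when each $\delta_i$ is of maximal rank. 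For a bundle that is generic in the image --- produced by blowing up $\tX$ at generic points and pulling back, as in the algorithms recalled from \cite{NNAD} --- I expect these maps to have maximal rank, so that the final value $h^1(\calL_{gen})$ is minimal; a combinatorial check then identifies the resulting value with the maximiser of $h^1(\calO_{Z_1}) - (l',Z_1)$, matching the two bounds.

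\emph{Where the difficulty lies.} The delicate point is precisely the genericity input: since $\im(c^{l'}(Z))$ is in general a proper subvariety of $\pic^{l'}(Z)$, I cannot simply invoke the cohomology of a generic element of $\pic^{l'}(Z)$, but must instead establish maximal rank of the maps $\delta_i$ for the generic bundle \emph{in the image}. This forces one to track cohomology along the blow-up/computation sequence and to apply upper semicontinuity step by step, which is the stage where the generic analytic structure machinery is essential and which I expect to be the principal obstacle.
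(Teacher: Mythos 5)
A preliminary remark: the present paper contains no proof of Theorem \ref{dim} at all --- it is recalled verbatim from \cite{NNAD}, so your attempt has to be measured against the argument of that reference (part of whose machinery, the invariants $e_Z(I)$ and the blow-up algorithm with the numbers $d_{{\bf s}}, e_{{\bf s}}$, is quoted in Section 5 here). Your first two steps are correct and agree with the standard set-up of these papers: the identity $\dim(\im(c^{l'}(Z))) = h^1(\calO_Z) - h^1(Z,\calL_{gen})$, with $\calL_{gen}$ generic in the image, is exactly the fact from \cite{NNA1} that is also invoked in the proof of the main theorem of Section 4, and your proof of $h^1(Z,\calL_{gen}) \geq h^1(\calO_{Z_1}) - (l',Z_1)$ (surjectivity of $H^1(Z,\calL_{gen}) \to H^1(Z_1,\calL_{gen}|_{Z_1})$, injectivity of multiplication by the equation of $D$ to get $h^0(\calL_{gen}|_{Z_1})\geq h^0(\calO_{Z_1})$, then Riemann--Roch on $Z_1$) is sound. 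One labelling slip: this lower bound on $h^1(\calL_{gen})$ yields the direction $\dim(\im(c^{l'}(Z))) \leq \min\{\cdots\}$ of \eqref{eq:form3}, not ``$\geq$'' as you state.

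The genuine gap is the reverse inequality, and your own closing paragraph concedes it: ``I expect these maps to have maximal rank'' is precisely what needs proof, and taken literally it is false. If every connecting map $\delta_i$ along your computation sequence had maximal rank, $h^1(Z,\calL_{gen})$ would be forced down to the value dictated by the dimension counts alone, which is in general strictly smaller than $\max_{0\leq Z_1\leq Z}\{h^1(\calO_{Z_1}) - (l',Z_1)\}$; whenever this maximum is attained at a proper subcycle, your already-proven lower bound shows some $\delta_i$ \emph{must} drop rank. In other words, the subcycle terms in \eqref{eq:form3} are exactly the obstructions to maximal rank, and the theorem asserts that for the generic bundle \emph{in the image} they are the only obstructions --- your plan assumes this rather than proving it, so it is circular. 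Closing this gap is where \cite{NNAD} does its real work, with tools absent from your sketch: Laufer duality and pole orders of differential forms (which produce the stable invariants $e_Z(I)$ and the affine structure of $\im(c^{nl'}(Z))$ for $n\gg 0$), together with an induction over towers of blow-ups at generic points organized by the $d_{{\bf s}}/e_{{\bf s}}$ algorithm recalled in Section 5; equivalently, Theorem \ref{dim} is the special case $\calL_0=\calO_Z$ of Corollary \ref{cor:formL}, whose proof requires that machinery. Semicontinuity plus genericity alone cannot substitute for it, because --- as you yourself note --- the bundle is constrained to lie in a proper subvariety of $\pic^{l'}(Z)$.
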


Furthemore in \cite{NNAD} the authors gave explicit combinatorial formulas in the case of generic singularities:

\begin{cor} 
Assume that $\tX$ has a generic analytic type corresponding to a fixed resolution graph $\mathcal{T}$, $Z \geq E$ an integral  cycle and  $l' \in -S'$.
For any $0\leq Z_1\leq Z$  write $E_{|Z_1|}$ for $\sum _{E_v\subset |Z_1|}E_v$. Then
\begin{equation}\label{eq:gen}
\dim(\im(c^{l'}(Z))) = 1  -
 \min_{E \leq l \leq Z}\ \{ \chi(l)\} +  \min_{0\leq Z_1\leq Z}\big\{ \, (l', Z_1)  +  \min_{E_{|Z_1|} \leq l \leq Z_1 }(\chi(l))
  - \chi(E_{|Z_1|})
 \, \big\}.
\end{equation}
In particular, $\dim(\im(c^{l'}(Z)))$ is topological.
\end{cor}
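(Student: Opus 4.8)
The plan is to obtain \eqref{eq:gen} directly from Theorem \ref{dim} by inserting the known generic values of $h^1$. Since $h^1(\calO_Z)$ is independent of the cycle $Z_1$ over which one minimises, I would first rewrite \eqref{eq:form3} as
\begin{equation*}
\dim(\im(c^{l'}(Z))) = h^1(\calO_Z) + \min_{0 \le Z_1 \le Z}\big\{\, (l', Z_1) - h^1(\calO_{Z_1}) \,\big\},
\end{equation*}
so that the whole task reduces to a combinatorial evaluation of $h^1$ of the structure sheaf of an effective cycle under the generic analytic assumption.

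The essential external input is the generic cohomology formula of \cite{NNA1, NNA2}, which I would use in the form: for a generic analytic structure on $\mathcal{T}$ and any cycle $0 \le Y \le Z$,
\begin{equation*}
h^1(\calO_Y) = \chi(E_{|Y|}) - \min_{E_{|Y|} \le l \le Y}\chi(l),
\end{equation*}
the minimum ranging over integral cycles supported on $|Y|$ (and reading $0=0$ when $Y=0$). I would apply this twice. First, with $Y = Z$: since $Z \ge E$ has full support and $\mathcal{T}$ is a connected tree of rational curves (the link being a rational homology sphere), one has $E_{|Z|} = E$ and $\chi(E) = 1$, whence $h^1(\calO_Z) = 1 - \min_{E \le l \le Z}\chi(l)$, which is exactly the first block of terms in \eqref{eq:gen}. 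Second, with $Y = Z_1$ for each $0 \le Z_1 \le Z$: this produces $-h^1(\calO_{Z_1}) = \min_{E_{|Z_1|} \le l \le Z_1}\chi(l) - \chi(E_{|Z_1|})$, precisely the bracketed quantity minimised over $Z_1$ in \eqref{eq:gen}. Substituting both evaluations into the rewritten Theorem \ref{dim} gives \eqref{eq:gen}; the degenerate cycle $Z_1 = 0$ contributes $0$ on each side and is harmless.

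For the topological assertion I would simply note that every ingredient of the right-hand side is read off the resolution graph: the pairing $(l', Z_1)$ from the intersection form, the Riemann--Roch function $\chi$ from the intersection form together with the canonical class $K$, the reduced cycles $E_{|Z_1|}$ from the supports, and both lattice minimisations from these data. Hence $\dim(\im(c^{l'}(Z)))$ depends only on $\mathcal{T}$.

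The step I expect to be the real obstacle is the legitimacy of applying the generic formula to the subcycles $Z_1$ and not only to $Z$. The number $h^1(\calO_{Z_1})$ depends only on the analytic germ in a neighbourhood of $|Z_1|$, so I must argue that a generic structure on all of $\mathcal{T}$ induces, on the subgraph carried by $|Z_1|$, a structure for which the formula of \cite{NNA1, NNA2} remains valid --- together with the correct normalisation $\chi(E_{|Z_1|})$, which on a tree of rational curves records the number of connected components of $|Z_1|$. Establishing this inheritance of genericity under passage to subgraphs is the crux; once it is granted, the remainder is a purely formal substitution.
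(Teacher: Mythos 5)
Your proposal is correct and is essentially the intended proof of this corollary in \cite{NNAD} (which the present paper only quotes): one rewrites Theorem \ref{dim} as $\dim(\im(c^{l'}(Z))) = h^1(\calO_Z) + \min_{0\le Z_1\le Z}\{(l',Z_1)-h^1(\calO_{Z_1})\}$ and substitutes the generic-structure formula $h^1(\calO_Y)=\chi(E_{|Y|})-\min_{E_{|Y|}\le l\le Y}\chi(l)$ of \cite{NNA1,NNA2}, once for $Y=Z$ (where $E_{|Z|}=E$ and $\chi(E)=1$ on a tree of rational curves) and once for each $Y=Z_1$. The ``crux'' you flag is not actually a gap: the cited formula is established in \cite{NNA2} for an arbitrary effective cycle on a generic resolution, with arbitrary (possibly disconnected, non-full) support, so no separate inheritance-of-genericity argument for subcycles is needed.
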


In the present article we wish to generalise the theorems from \cite{NNAD} to the relatively generic case.
In this case we fix a subsingularity $\tX_1$ for a subgraph $\mathcal{T}_1 \subset \mathcal{T}$ and a relatively generic singularity $\tX$ corresponding to $\tX_1$. 
Furthermore we fix a line bundle $\calL$ on $\tX_1$ and a Chern class $l' \in - S'$, such that $c^1(\calL)  = R(l')$.

Let's denote the subset of effective Cartier divisors $\eca^{ l', \calL}(Z) \subset \eca^{ l'}(Z)$, such that $D \in \eca^{ l', \calL}(Z)$ if and only if
$\calO_{Z_1}(D) = \calL | Z_1$.

By \cite{R} we know, that $\eca^{ l', \calL}(Z)$ is an irreducible smooth subvariety of $\eca^{ l'}(Z)$.

Notice that $c^{l'}(\eca^{ l', \calL}(Z)) = c^{l'}(\eca^{ l'}(Z)) \cap r^{-1}(\calL |Z_1)$.

Our main goal in the article is to compute $\dim(c^{l'}(\eca^{ l', \calL}(Z)))$ from topological and analytical invariants of the subsingularity $\tX_1$ and to conclude a few corollaries.

In section 2) we summarise the topological and analytic invariants we need in the article.

In section 3) we summarise the results we need from \cite{R} about cohomology of relatively generic line bundles and cohomology of natural line bundles on relatively generic
singularities.

In section 4) we state and proof our main theorem about dimensions of images of relative Abel maps.

In section 5) as a corollary we give a formula for dimensions of Abel images  on relatively generic singularities, which is different in structure then the formulae given by the results
from \cite{NNAD}.

\section{Preliminaries}\label{s:prel}

\subsection{The resolution}\label{ss:notation}
Let $(X,o)$ be the germ of a complex analytic normal surface singularity,
 and let us fix  a good resolution  $\phi:\widetilde{X}\to X$ of $(X,o)$.
We denote the exceptional curve $\phi^{-1}(0)$ by $E$, and let $\cup_{v\in\calv}E_v$ be
its irreducible components. Set also $E_I:=\sum_{v\in I}E_v$ for any subset $I\subset \calv$.
For the cycle $l=\sum n_vE_v$ let its support be $|l|=\cup_{n_v\not=0}E_v$.
For more details see \cite{trieste,NCL,Nfive}.
\subsection{Topological invariants}\label{ss:topol}
Let $\calt$ be the dual resolution graph
associated with $\phi$;  it  is a connected graph.
Then $M:=\partial \widetilde{X}$ can be identified with the link of $(X,o)$, it is also
an oriented  plumbed 3--manifold associated with $\calt$.
We will assume that  $M$ is a rational homology sphere,
or, equivalently,  $\mathcal{T}$ is a tree and all genus
decorations of $\mathcal{T}$ are zero. We use the same
notation $\mathcal{V}$ for the set of vertices, and $\delta_v$ for the valency of a vertex $v$.

$L:=H_2(\widetilde{X},\mathbb{Z})$, endowed
with a negative definite intersection form  $I=(\,,\,)$, is a lattice. It is
freely generated by the classes of 2--spheres $\{E_v\}_{v\in\mathcal{V}}$.
 The dual lattice $L':=H^2(\widetilde{X},\mathbb{Z})$ is generated
by the (anti)dual classes $\{E^*_v\}_{v\in\mathcal{V}}$ defined
by $(E^{*}_{v},E_{w})=-\delta_{vw}$, the opposite of the Kronecker symbol.
The intersection form embeds $L$ into $L'$. Then $H_1(M,\mathbb{Z})\simeq L'/L$, abridged by $H$.
Usually one also identifies $L'$ with those rational cycles $l'\in L\otimes \Q$ for which
$(l',L)\in\Z$, or, $L'={\rm Hom}_\Z(L,\Z)$.

Each class $h\in H=L'/L$ has a unique representative $r_h=\sum_vr_vE_v\in L'$ in the semi-open cube
(i.e. each $r_v\in \bQ\cap [0,1)$), such that its class  $[r_h]$ is $h$.

All the $E_v$--coordinates of any $E^*_u$ are strict positive.
We define the Lipman cone as $\calS':=\{l'\in L'\,:\, (l', E_v)\leq 0 \ \mbox{for all $v$}\}$.
It is generated over $\bZ_{\geq 0}$ by $\{E^*_v\}_v$.

\subsection{Analytic invariants}\label{ss:analinv}

The group ${\rm Pic}(\widetilde{X})$ of  isomorphism classes of analytic line bundles on $\widetilde{X}$ appears in the exact sequence
\begin{equation}\label{eq:PIC}
0\to {\rm Pic}^0(\widetilde{X})\to {\rm Pic}(\widetilde{X})\stackrel{c_1}
{\longrightarrow} L'\to 0, \end{equation}
where  $c_1$ denotes the first Chern class.

 Here $ {\rm Pic}^0(\widetilde{X})=H^1(\widetilde{X},\calO_{\widetilde{X}})\simeq
\C^{p_g}$, where $p_g$ is the {\it geometric genus} of
$(X,o)$. $(X,o)$ is called {\it rational} if $p_g(X,o)=0$.
 Artin in \cite{Artin62,Artin66} characterized rationality topologically
via the graphs; such graphs are called `rational'. By this criterion, $\calt$
is rational if and only if $\chi(l)\geq 1$ for any effective non--zero cycle $l\in L_{>0}$.
Here $\chi(l)=-(l,l-Z_K)/2$, where $Z_K\in L'$ is the (anti)canonical cycle
identified by adjunction formulae
$(-Z_K+E_v,E_v)+2=0$ for all $v$.

The epimorphism
$c_1$ admits a unique group homomorphism section $l'\mapsto s(l')\in {\rm Pic}(\widetilde{X})$,
 which extends the natural
section $l\mapsto \calO_{\widetilde{X}}(l)$ valid for integral cycles $l\in L$, and
such that $c_1(s(l'))=l'$  \cite{trieste,OkumaRat}.
We call $s(l')$ the  {\it natural line bundles} on $\widetilde{X}$.
By  their definition, $\calL$ is natural if and only if some power $\calL^{\otimes n}$
of it has the form $\calO(-l)$ for some $l\in L$.

\bekezdes
Similarly, if $Z\in L_{>0}$ is an effective non--zero integral cycle such that $|Z| =E$, and $\calO_Z^*$ denotes
the sheaf of units of $\calO_Z$, then ${\rm Pic}(Z)=H^1(Z,\calO_Z^*)$ is  the group of isomorphism classes
of invertible sheaves on $Z$. It appears in the exact sequence
  \begin{equation}\label{eq:PICZ}
0\to {\rm Pic}^0(Z)\to {\rm Pic}(Z)\stackrel{c_1}
{\longrightarrow} L'\to 0, \end{equation}
where ${\rm Pic}^0(Z)=H^1(Z,\calO_Z)$.
If $Z_2\geq Z_1$ then there are natural restriction maps (for simplicity we denote all of them by
the same symbol $r$), ${\rm Pic}(\widetilde{X})\to {\rm Pic}(Z_2)\to {\rm Pic}(Z_1)$. Similar restrictions are defined at  ${\rm Pic}^0$ level too.
These restrictions are homomorphisms of the exact sequences  (\ref{eq:PIC}) and (\ref{eq:PICZ}).

Furthermore, we define a section of (\ref{eq:PICZ}) by
$s_Z(l'):=r(s(l'))={\mathcal O}_{\widetilde{X}}(l')|_{Z}$.
They also satisfies $c_1\circ s_Z={\rm id}_{L'}$. We write  ${\mathcal O}_{Z}(l')$ for $s_Z(l')$, and we call them
 {\it natural line bundles } on $Z$.

We also use the notations ${\rm Pic}^{l'}(\widetilde{X}):=c_1^{-1}(l')
\subset {\rm Pic}(\widetilde{X})$ and
${\rm Pic}^{l'}(Z):=c_1^{-1}(l')\subset{\rm Pic}(Z)$
respectively. Multiplication by $\calO_{\widetilde{X}}(-l')$, or by
$\calO_Z(-l')$, provides natural vector space isomorphisms
${\rm Pic}^{l'}(\widetilde{X})\to {\rm Pic}^0(\widetilde{X})$ and
${\rm Pic}^{l'}(Z)\to {\rm Pic}^0(Z)$.

Consider an effective cycle $Z $, and a Chern class $l'\in-\calS'$ associated with a resolution $\tX$, as above. 

Then, besides the Abel map $c^{l'}(Z)$ one can consider its `multiples' $\{c^{nl'}(Z)\}_{n\geq 1}$.

 It turns out that $n\mapsto \dim \im (c^{nl'}(Z))$
is a non-decreasing sequence,  $\im (c^{nl'}(Z))$ is an affine subspace
for $n\gg 1$, whose dimension $e_Z(l')$ is independent of $n\gg 0$, and essentially it depends only
on the $E^*$--support of $l'$ (i.e., on $I\subset \calv$, where $-l'=\sum_{v\in I}a_vE^*_v$ with all
$\{a_v\}_{v\in I}$ nonzero).
The statement  $e_Z(l')=e_Z(I)$ plays a crucial role in different analytic properties of $\tX$
(surgery formula, $h^1(\calL)$--computations, base point freeness properties).
 For details and for more about effective Cartier divisors and Abel maps see \cite{NNA1}.

\subsection{Notations.} We will write $Z_{min}\in L$ for the  {\it minimal} (or fundamental, or Artin) cycle, which is
the minimal non--zero cycle of $\calS'\cap L$ \cite{Artin62,Artin66}. Yau's {\it maximal ideal cycle}
$Z_{max}\in L$ defines the  divisorial part of the pullback of the maximal ideal $\m_{X,o}\subset \calO_{X,o}$, i.e.
 $\phi^*{\m_{X,o}}\cdot \calO_{\widetilde{X}}=\calO_{\widetilde{X}}(-Z_{max})\cdot \cali$,
where $\cali$ is an ideal sheaf with 0--dimensional support \cite{Yau1}. In general $Z_{min}\leq Z_{max}$.

\section{Relatively generic analytic structures on surface singularities}

In this section we wish to summarise the results from \cite{R} about relatively generic analytic structures we need in this article. 

\subsection{The relative setup.}

We consider a cycle $Z \geq E$ on the resolution $\tX$, and a smaller cycle $Z_1 \leq Z$, where we denote $|Z_1| = \calv_1$ and the subgraph corresponding to it by $\Gamma_1$.
We have the restriction map $r:\pic(Z)\to \pic(Z_1)$ and one has also the  (cohomological) restriction operator
  $R_1 : L'(\Gamma) \to L_1':=L'(\Gamma_1)$
(defined as $R_1(E^*_v(\Gamma))=E^*_v(\Gamma_1)$ if $v\in \calv_1$, and
$R_1(E^*_v(\Gamma))=0$ otherwise).
For any $\calL\in \pic(Z)$ and any $l'\in L'(\Gamma)$ it satisfies
\begin{equation}
c_1(r(\calL))=R_1(c_1(\calL)).
\end{equation}

In particular,
we have the following commutative diagram as well:

\begin{equation*}  
\begin{picture}(200,40)(30,0)
\put(50,37){\makebox(0,0)[l]{$
\ \ \eca^{l'}(Z)\ \ \ \ \ \stackrel{c^{l'}}{\longrightarrow} \ \ \ \pic^{l'}(Z)$}}
\put(50,8){\makebox(0,0)[l]{$
\eca^{R_1(l')}(Z_1)\ \ \stackrel{c^{R_1(l')}}{\longrightarrow} \  \pic^{R_1(l')}(Z_1)$}}
\put(162,22){\makebox(0,0){$\downarrow \, $\tiny{$r$}}}
\put(78,22){\makebox(0,0){$\downarrow \, $\tiny{$\fr$}}}
\end{picture}
\end{equation*}

By the `relative case' we mean that instead of the `total' Abel map
$c^{l'}$ (with $l'\in -\calS'$ and  $Z\geq E$)
we study its restriction above a fixed fiber of $r$.
That is, we fix some  $\mfl\in \pic^{R_1(l')}(Z_1)$, and we study
the restriction of $c^{l'}$ to $(r\circ c^{l'})^{-1}(\mfl)\to r^{-1}(\mfl)$.

\begin{definition} Denote the subvariety $(r\circ c^{l'})^{-1}(\mfl)
=(c^{R_1(l')} \circ \fr)^{-1}(\mfl) \subset \eca^{l'}(Z)$ by
$\eca^{l', \mfl}$.
\end{definition}

\begin{proposition}\label{lem:locsubmer} (a) $\fr$ is a local submersion, that is, for any
$D\in \eca ^{l'}(Z)$ and $D_1:=\fr(D)$, the tangent map $T_D\fr$ is surjective.

(b) $\fr$ is dominant.

(c) any non--empty fiber of  $\fr$ is smooth of dimension
$(l', Z)-(l',Z_1)=(l',Z_2)$, and it is irreducible.

\end{proposition}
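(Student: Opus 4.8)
The plan is to reduce everything to one computation: the tangent map of $\fr$ expressed as a restriction of sections of a line bundle on a zero–dimensional scheme. I would begin by recalling from \cite{NNA1} that $\eca^{l'}(Z)$ is smooth of dimension $(l',Z)$ and that for $D\in\eca^{l'}(Z)$ there is a canonical identification $T_D\eca^{l'}(Z)=H^0(\calO_D(D))$, where $\calO_D(D):=\calO_Z(D)\otimes\calO_D$ is the restriction of $\calO_Z(D)$ to the zero–dimensional subscheme $D\subset Z$; the analogous facts hold for $\eca^{R_1(l')}(Z_1)$. By construction $\fr$ sends $D$ to the scheme–theoretic restriction $D_1=D\cap Z_1$, a closed subscheme of $D$, with $r(\calO_Z(D))=\calO_{Z_1}(D_1)$. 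Under these identifications I would check that $T_D\fr$ is exactly the restriction map $H^0(\calO_D(D))\to H^0(\calO_{D_1}(D_1))$ induced by $\calO_D\twoheadrightarrow\calO_{D_1}$, using functoriality of first–order deformation theory (restriction of divisors corresponds to restriction of normal–bundle sections, $N_{D/Z}|_{D_1}=N_{D_1/Z_1}$, the latter being legitimate because $\fr(D)$ is assumed to be a genuine Cartier divisor on $Z_1$).

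For (a) the decisive point is that $D$ is zero–dimensional, hence Artinian. Then $\calO_D(D)$ is a line bundle on a finite scheme, the global section functor is exact on such sheaves, and tensoring the surjection $\calO_D\twoheadrightarrow\calO_{D_1}$ with the locally free $\calO_Z(D)$ keeps it surjective on $H^0$. Thus $T_D\fr$ is onto and $\fr$ is a local submersion. As a byproduct $\dim\ker T_D\fr=\dim H^0(\calO_D(D))-\dim H^0(\calO_{D_1}(D_1))=(l',Z)-(l',Z_1)=(l',Z_2)$, since these section spaces have dimensions equal to the lengths of $D$ and $D_1$, namely $(l',Z)$ and $(R_1(l'),Z_1)=(l',Z_1)$.

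Part (b) is then formal: a morphism of smooth varieties with everywhere–surjective differential is smooth, in particular open, so its image is a non–empty open subset of the irreducible variety $\eca^{R_1(l')}(Z_1)$ (irreducibility from \cite{NNA1}), hence dense, and $\fr$ is dominant. For (c), the submersion property of (a) gives at once that every non–empty fibre is smooth of pure dimension $\dim\eca^{l'}(Z)-\dim\eca^{R_1(l')}(Z_1)=(l',Z_2)$, in agreement with the kernel computation.

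The genuinely substantial point, and the one I expect to be the main obstacle, is the irreducibility of the fibres. Since each non–empty fibre $F=\fr^{-1}(D_1)$ is already smooth, it suffices to prove that $F$ is connected. I would do this by projecting $F$ through the Abel map onto the set $P=r^{-1}(\calO_{Z_1}(D_1))\cap\pic^{l'}(Z)$ of line bundles on $Z$ restricting to $\calO_{Z_1}(D_1)$: the space $P$ is a torsor under $\ker(\pic^0(Z)\to\pic^0(Z_1))$, hence an affine space and in particular connected, while the fibre of $F\to P$ over $L$ is the set of sections of $L$ whose divisor restricts to $D_1$, i.e. an open subset of the linear subspace of $H^0(Z,L)$ lifting the fixed section defining $D_1$ (controlled by the restriction sequence $0\to L|_{Z_2}(-Z_1)\to L\to L|_{Z_1}\to 0$), which is irreducible. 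Connectedness of $F$ would then follow from connectedness of base and fibres. The delicate issue is precisely that $\dim H^0(L|_{Z_2}(-Z_1))$ may jump along $P$, so $F\to P$ need not be visibly equidimensional or open, and the connectedness argument must be made rigorous; this is where I would invoke the structural results of \cite{R} and \cite{NNA1}, which in fact identify $F$ with an affine space of dimension $(l',Z_2)$ and thereby settle irreducibility outright.
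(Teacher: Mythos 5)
First, a remark on the comparison itself: the paper does not prove Proposition~\ref{lem:locsubmer} at all. It appears in the section that explicitly ``summarises the results from \cite{R}'' and is quoted there without proof, so your proposal can only be judged on its own merits. On those merits, your parts (a), (b) and the smoothness/dimension claims of (c) are correct and are the expected arguments: $T_D\eca^{l'}(Z)\cong H^0(D,\calO_D(D))$, the tangent map of $\fr$ is the restriction map of normal--bundle sections via $N_{D/Z}|_{D_1}=N_{D_1/Z_1}$, and surjectivity holds because the kernel of $\calO_D(D)\to\calO_{D_1}(D_1)$ is supported on the zero--dimensional scheme $D$, hence has no higher cohomology; dominance and the fiber dimension $(l',Z)-(l',Z_1)=(l',Z_2)$ then follow formally, and your length bookkeeping is right since $R_1$ is adjoint to the inclusion of lattices.

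The genuine gap is the irreducibility of the fibers in (c) --- exactly where you flagged it, but flagging it does not close it. Connectedness of the base $P=r^{-1}(\calO_{Z_1}(D_1))\cap\pic^{l'}(Z)$ together with irreducibility of the nonempty fibers of $F\to P$ does not imply connectedness of $F$: one needs in addition that the map be open (or proper, or flat) and surjective onto $P$, and both fail here in general. The fiber over $L$ sits inside the projectivization of the linear space $\{s\in H^0(Z,L)\,:\, s|_{Z_1}\in\bC s_1\}$ (with $s_1$ the section cutting out $D_1$), whose dimension jumps along $P$ because the obstruction to lifting $s_1$ lies in $H^1(Z_2,L(-Z_1)|_{Z_2})$; consequently the image of $F\to P$ is only a constructible subset of $P$, and its connectedness is essentially equivalent to what you are trying to prove. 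Moreover, your fallback --- invoking ``the structural results of \cite{R} and \cite{NNA1} which identify $F$ with an affine space'' --- is circular: this proposition \emph{is} the structural result of \cite{R} being quoted. Note also that what \cite{NNA1} and Corollary~\ref{cor:smoothirreddim} actually describe is $\eca^{l',\mfl}=(r\circ c^{l'})^{-1}(\mfl)$, which is the union of the $\fr$--fibers over the entire Abel fiber $(c^{R_1(l')})^{-1}(\mfl)$, not the single fiber $\fr^{-1}(D_1)$; their dimensions differ precisely by the dimension of that Abel fiber, so quoting those results verbatim cannot give (c). As it stands, the irreducibility statement would need a genuine argument --- for instance a direct parametrization of $\fr^{-1}(D_1)$ by local equations congruent to those of $D_1$ modulo the ideal of $Z_1$, or an induction on $Z_2$ producing a dense irreducible subset meeting every component --- and none is supplied.
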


\begin{corollary}\label{cor:smoothirreddim}
Fix $l'\in -\calS'$, $Z\geq E$, $Z_1 \leq Z$ and  $\mfl\in \pic^{R(l')}(Z_1)$.
Assume that  $\eca^{l', \mfl}$ is nonempty. Then it is smooth of dimension
 $h^1(Z_1,\mfl)  - h^1(Z_1,\calO_{Z_1})+ (l', Z)$ and it is irreducible.
\end{corollary}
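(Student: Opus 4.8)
The plan is to exhibit $\eca^{l',\mfl}$ as the total space of the fibration $\fr$ over a single Abel fibre of the small cycle, and then to read off its dimension as (dimension of the base Abel fibre) $+$ (dimension of a fibre of $\fr$). By the definition of $\eca^{l',\mfl}$ and the commutative diagram, $\eca^{l',\mfl}=(c^{R_1(l')}\circ\fr)^{-1}(\mfl)=\fr^{-1}(B)$, where
\[
B:=\big(c^{R_1(l')}(Z_1)\big)^{-1}(\mfl)\subset\eca^{R_1(l')}(Z_1)
\]
is the fibre over $\mfl$ of the Abel map of $Z_1$. Since $\eca^{l',\mfl}$ is assumed nonempty, $B$ is nonempty too.

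First I would analyse the base $B$. By the standard description of Abel fibres (see \cite{NNA1}), the fibre of an Abel map over a point of its image is, via the tautological section, the projectivisation of the subspace $H^0(Z_1,\mfl)^{\reg}$ of sections defining an effective Cartier divisor (the non zero-divisor sections); this is a dense open subset of $H^0(Z_1,\mfl)$, so $B$ is smooth, irreducible, of dimension $h^0(Z_1,\mfl)-1$. Next I would convert this into the stated form by Riemann--Roch on $Z_1$. As $c_1(\mfl)=R_1(l')$ and $(R_1(l'),Z_1)=(l',Z_1)$, the degree of $\mfl$ on $Z_1$ equals $(l',Z_1)$, whence $\chi(\mfl)=\chi(\calO_{Z_1})+(l',Z_1)$. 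Using $h^0(\calO_{Z_1})=1$ (so $\chi(\calO_{Z_1})=1-h^1(Z_1,\calO_{Z_1})$) for the connected $Z_1$, I obtain
\[
\dim B=h^0(Z_1,\mfl)-1=h^1(Z_1,\mfl)-h^1(Z_1,\calO_{Z_1})+(l',Z_1).
\]

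Then I would feed in Proposition \ref{lem:locsubmer}. Part (a) says $\fr$ is a local submersion of smooth varieties, hence a smooth --- in particular open and flat --- morphism; therefore $\eca^{l',\mfl}=\fr^{-1}(B)$ is smooth, being the preimage of the smooth variety $B$. Setting $U:=\fr(\eca^{l',\mfl})$, an open nonempty subset of $B$ with $\fr^{-1}(B)=\fr^{-1}(U)$, each fibre of $\fr$ over $U$ is, by part (c), smooth, irreducible and of dimension $(l',Z)-(l',Z_1)=(l',Z_2)$. A smooth morphism with irreducible equidimensional fibres over the irreducible base $U$ has irreducible total space, so $\eca^{l',\mfl}$ is irreducible, and
\[
\dim\eca^{l',\mfl}=\dim B+(l',Z_2)=h^1(Z_1,\mfl)-h^1(Z_1,\calO_{Z_1})+(l',Z),
\]
using $(l',Z_1)+(l',Z_2)=(l',Z)$; this is the claimed formula.

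The step that needs the most care is the fibration argument: I must ensure that the restriction of $\fr$ to $\eca^{l',\mfl}$ still has fibres of the generic dimension $(l',Z_2)$ over a nonempty open subset of the irreducible $B$, so that the two dimensions genuinely add and irreducibility is inherited. This is precisely what the submersivity in part (a) and the description of the nonempty fibres in part (c) of Proposition \ref{lem:locsubmer} supply; the nonemptiness hypothesis guarantees $U\neq\emptyset$, and openness of the smooth morphism $\fr$ guarantees $\dim U=\dim B$. The remaining ingredients --- the identification of $B$ with a projectivised space of regular sections and the Riemann--Roch computation --- are routine.
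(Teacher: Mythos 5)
Your overall strategy --- writing $\eca^{l',\mfl}=\fr^{-1}(B)$ with $B$ the Abel fibre of $Z_1$ over $\mfl$, importing smoothness, irreducibility and fibre dimension of $\fr$ from Proposition \ref{lem:locsubmer}, and finishing with Riemann--Roch --- is the right one. Note that the paper itself gives no proof of this corollary: Section 3 only recalls it from \cite{R}, and Proposition \ref{lem:locsubmer} is quoted immediately before it precisely because it is the input of that argument; your reconstruction follows this intended route. The gluing of the two fibrations is also handled correctly: smooth base change gives smoothness of $\fr^{-1}(B)$, and openness of the smooth morphism $\fr$ together with connectedness of its fibres over the irreducible base gives irreducibility and the additivity of dimensions.

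However, your description of the base $B$ contains two genuine errors which happen to cancel. First, the fibre of $c^{R_1(l')}(Z_1)$ over a bundle $\mfl$ in its image is \emph{not} $\bP\bigl(H^0(Z_1,\mfl)_{reg}\bigr)$: two regular sections cut out the same effective Cartier divisor if and only if they differ by a \emph{global unit}, so the fibre is $H^0(Z_1,\mfl)_{reg}/H^0(\calO_{Z_1})^*$, of dimension $h^0(Z_1,\mfl)-h^0(\calO_{Z_1})$; this is the form in which it is stated in \cite{NNA1}. Second, the claim $h^0(\calO_{Z_1})=1$ (equivalently $\chi(\calO_{Z_1})=1-h^1(Z_1,\calO_{Z_1})$) is false in general: $Z_1\leq Z$ is an arbitrary effective cycle, typically non-reduced, and its support need not even be connected. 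For instance, for $Z_1=2E_v$ with $E_v^2=-b$ the sequence $0\to\calO_{E_v}(-E_v)|_{E_v}\to\calO_{2E_v}\to\calO_{E_v}\to 0$ gives $h^0(\calO_{Z_1})=b+2$. Your two mistakes compensate exactly: the correct computation reads $\dim B=h^0(Z_1,\mfl)-h^0(\calO_{Z_1})$ and $\chi(\calO_{Z_1})=h^0(\calO_{Z_1})-h^1(Z_1,\calO_{Z_1})$, so the term $h^0(\calO_{Z_1})$ cancels in Riemann--Roch and one lands on the same expression $\dim B=h^1(Z_1,\mfl)-h^1(Z_1,\calO_{Z_1})+(l',Z_1)$ that you wrote, after which the rest of your argument goes through verbatim. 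So the conclusion and the architecture of your proof are correct, but as written it asserts two false statements; replace the projectivisation by the quotient $H^0(Z_1,\mfl)_{reg}/H^0(\calO_{Z_1})^*$ and drop the assumption $h^0(\calO_{Z_1})=1$, and the proof is sound.
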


Let' recall from \cite{R} the analouge of the theroems about dominance of Abel maps in the relative setup:

\begin{definition}
Fix $l'\in -\calS'$,
$Z\geq E$, $Z_1 \leq Z$ and  $\mfl\in \pic^{R_1(l')}(Z_1)$ as above.
We say that the pair $(l',\mfl ) $ is {\it relative
dominant} if the closure of $ r^{-1}(\mfl)\cap {\rm Im}(c^{l'})$ in  $r^{-1}(\mfl)$
is $r^{-1}(\mfl)$.
\end{definition}

\begin{theorem}\label{th:dominantrel} One has the following facts:

(1) If $(l',\mfl)$ is relative dominant then $ \eca^{l', \mfl}$ is
nonempty and $h^1(Z,\calL)= h^1(Z_1,\mfl)$ for any
generic $\calL\in r^{-1}(\mfl)$.

(2) $(l',\mfl)$ is relative dominant  if and only if for all
 $0<l\leq Z$, $l\in L$ one has
$$\chi(-l')- h^1(Z_1, \mfl) < \chi(-l'+l)-
 h^1((Z-l)_1, \mfl(-R_1(l))).$$, where we denote $(Z-l)_1 = \min(Z-l, Z_1)$.
\end{theorem}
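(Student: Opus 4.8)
The plan is to reinterpret relative dominance as an honest dominance statement for a single morphism and then convert it into a cohomological equality that can be read off from the relative generic cohomology computations recalled in Section 3. First I would observe that, writing $\Phi$ for the restriction of $c^{l'}$ to $\eca^{l',\mfl}=(r\circ c^{l'})^{-1}(\mfl)$, one has $c^{l'}(\eca^{l',\mfl})=\im(c^{l'})\cap r^{-1}(\mfl)$, so that $(l',\mfl)$ is relative dominant precisely when $\Phi\colon \eca^{l',\mfl}\to r^{-1}(\mfl)$ is dominant. In particular the nonemptiness assertion in (1) is immediate, since a dominant morphism has nonempty source. Note also that $r\colon \pic^0(Z)\to\pic^0(Z_1)$ is surjective (its cokernel sits inside an $H^2$ of a sheaf supported on a curve), whence $\dim r^{-1}(\mfl)=h^1(\calO_Z)-h^1(\calO_{Z_1})$.

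Next I would run the basic dimension count. By Corollary \ref{cor:smoothirreddim} the source $\eca^{l',\mfl}$ is irreducible of dimension $h^1(Z_1,\mfl)-h^1(\calO_{Z_1})+(l',Z)$. The fibre of $\Phi$ over $\calL$ coincides with the fibre $(c^{l'})^{-1}(\calL)$, which for $\calL$ generic in the image is an open subset of $\mathbb{P}(H^0(Z,\calL))$, of dimension $h^0(Z,\calL)-1$; combined with Riemann--Roch in the form $\chi(Z,\calL)=\chi(\calO_Z)+(l',Z)$ this yields
\[
\dim\im(\Phi)=\big(h^1(\calO_Z)-h^1(\calO_{Z_1})\big)-\big(h^1(Z,\calL)-h^1(Z_1,\mfl)\big)
\]
for $\calL$ generic in $\im(\Phi)$. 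The restriction sequence $0\to \mathcal{K}\to \calL\to \mfl\to 0$, with $\mathcal{K}$ the kernel of $\calL\to \calL|_{Z_1}=\mfl$, shows $h^1(Z,\calL)\ge h^1(Z_1,\mfl)$ for every $\calL\in r^{-1}(\mfl)$, so $\dim\im(\Phi)\le \dim r^{-1}(\mfl)$ with equality exactly when the generic value of $h^1(Z,\calL)$ equals $h^1(Z_1,\mfl)$. This simultaneously proves statement (1) (dominance forces the generic fibre of $\Phi$ to have the expected dimension, hence $h^1(Z,\calL)=h^1(Z_1,\mfl)$ for generic $\calL$) and reduces statement (2) to the purely cohomological claim that the generic value of $h^1(Z,\calL)$ on $r^{-1}(\mfl)$ equals $h^1(Z_1,\mfl)$ if and only if the displayed inequality holds for all $0<l\le Z$.

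For that last claim I would invoke the relative generic cohomology results of Section 3, computing the minimal (hence generic) value of $h^1(Z,\calL)$ over $r^{-1}(\mfl)$ by an induction that peels off integral cycles $0<l\le Z$. At the level indexed by $l$ the relevant restricted datum is $\mfl(-R_1(l))$ on $(Z-l)_1=\min(Z-l,Z_1)$, and the obstruction to the cohomology dropping to its minimum is measured by $\chi(-l'+l)-\chi(-l')-h^1((Z-l)_1,\mfl(-R_1(l)))+h^1(Z_1,\mfl)$, which vanishes for $l=0$. The generic $h^1$ equals $h^1(Z_1,\mfl)$ exactly when no jump occurs at any $l>0$; using $\chi(-l'+l)-\chi(-l')=(l',l)+1-h^1(\calO_l)$ and the integrality of all quantities involved, ``no jump'' turns the non-strict cohomological inequality into the strict inequality
\[
\chi(-l')-h^1(Z_1,\mfl)<\chi(-l'+l)-h^1((Z-l)_1,\mfl(-R_1(l)))
\]
of the statement.

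I expect the genuine difficulty to lie entirely in the third step: identifying the generic value of $h^1(Z,\calL)$ on the fibre $r^{-1}(\mfl)$ with the expression above. This requires tracking how the fixed datum $\mfl$ degrades to $\mfl(-R_1(l))$ along the induction, controlling the genericity of $\calL$ simultaneously with the auxiliary divisors, and --- for the converse direction of (2) --- ensuring that a generic $\calL$ realizing the minimal $h^1$ actually carries a regular (non--zero--divisor) section, so that it lies in $\im(c^{l'})$ and not merely in its closure. The bookkeeping of the strict versus non-strict inequality, forced by integrality as indicated, is the only delicate point in converting the cohomological equality back into the stated criterion; the remaining ingredients (submersivity of $\fr$, smoothness and irreducibility of the fibres) are supplied by Proposition \ref{lem:locsubmer} and Corollary \ref{cor:smoothirreddim}.
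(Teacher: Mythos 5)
Your proposal cannot be checked against an in-paper argument, because this paper never proves Theorem \ref{th:dominantrel}: it is quoted from \cite{R} as background. So I will assess the proposal on its own terms, against the statement and the results it invokes. The reduction of relative dominance to dominance of $\Phi=c^{l'}|_{\eca^{l',\mfl}}$, the nonemptiness claim, and the dimension count proving part (1) are essentially sound. One small correction: the fibre of the Abel map over $\calL$ is the set of regular sections modulo the units $H^0(Z,\calO_Z)^*$, so its dimension is $h^0(Z,\calL)-h^0(Z,\calO_Z)$, not $h^0(Z,\calL)-1$; your displayed formula for $\dim\im(\Phi)$ is nevertheless the correct one, so this does not affect the outcome.

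The genuine gap is in part (2), and it is exactly the point you dismissed as ``bookkeeping \dots forced by integrality''. Your argument conflates two different generic values of $h^1$: the value at a generic point of $\im(\Phi)$ (which is what the fibre-dimension count controls) and the value at a generic point of $r^{-1}(\mfl)$ (which is what Theorem \ref{rel} computes). These coincide when $\Phi$ is dominant, but using the identification in both directions makes your criterion come out as the \emph{non-strict} inequality: by Theorem \ref{rel}, the generic value of $h^1$ on $r^{-1}(\mfl)$ equals $h^1(Z_1,\mfl)$ precisely when the term $l=0$ attains the minimum, i.e.\ when $\chi(-l')-h^1(Z_1,\mfl)\leq \chi(-l'+l)-h^1((Z-l)_1,\mfl(-R_1(l)))$ for all $0<l\leq Z$. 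That is strictly weaker than the stated criterion, and no integrality argument can convert one into the other, since both conditions are integral and genuinely inequivalent. The discrepancy occurs exactly when the minimum is attained at $l=0$ and also at some $l_0>0$: then a generic $\calL\in r^{-1}(\mfl)$ still satisfies $h^1(Z,\calL)=h^1(Z_1,\mfl)$, but the equivalent $h^0$-form of Theorem \ref{rel} forces $h^0(Z,\calL)=h^0(Z-l_0,\calL(-l_0))$, so every section of $\calL$ vanishes along $l_0$, hence $H^0(Z,\calL)_0=\emptyset$ and $\calL\notin\im(c^{l'}(Z))$; the image then misses a dense open subset of $r^{-1}(\mfl)$ and $(l',\mfl)$ is \emph{not} relative dominant, although your criterion would declare it so. (A concrete instance already occurs in the absolute case $Z_1=0$ with $l'=0$ on an elliptic germ: the non-strict inequality holds while $c^{0}(Z)$ is clearly not dominant.) Repairing this requires precisely the section-level analysis you deferred: for ``strict inequality $\Rightarrow$ dominance'' one uses strictness to get $h^0(Z,\calL)>h^0(Z-E_v,\calL(-E_v))$ for every $E_v\leq Z$, so that $H^0(Z,\calL)$ is not the union of the proper subspaces of sections vanishing along a component, which produces a regular section; for the converse one needs the vanishing argument above. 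This is the mathematical core of the theorem — compare Theorem \ref{rell}(1), where $H^0(Z,\calL)_0\neq\emptyset$ is tied to the same strict inequality — not an afterthought.
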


\begin{theorem}\label{rel}
Fix $l'\in -\calS'$,
$Z\geq E$ , $Z_1 \leq Z$ and  $\mfl\in \pic^{R_1(l')}(Z_1)$  as in Theorem \ref{th:dominantrel}. 
Then for any $\calL\in r^{-1}(\mfl)$ one has
\begin{equation}\label{eq:genericLrel}
\begin{array}{ll}h^1(Z,\calL)\geq \chi(-l')-
\min_{0\leq l\leq Z,\ l\in L} \{\,\chi(-l'+l) -
h^1((Z-l)_1, \mfl(-R_1(l)))\, \}, \ \ \mbox{or, equivalently,}\\
h^0(Z,\calL)\geq \max_{0\leq l\leq Z,\, l\in L}
\{\,\chi(Z-l,\calL(-l))+  h^1((Z-l)_1, \mfl(-R_1(l)))\,\}.\end{array}\end{equation}
Furthermore, if $\calL$ is generic in $r^{-1}(\mfl)$
then in both inequalities we have equalities and we have the even stronger statement, that $h^0(Z,\calL) =
\max_{0\leq l\leq Z,\, l\in L, H^0(Z-l, \calL-l)_0 \neq \emptyset}\{\,\chi(Z-l,\calL(-l))+  h^1((Z-l)_1,  \mfl(-R_1(l)))\,\}$.
\end{theorem}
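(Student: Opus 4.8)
First I would record that the two displayed inequalities are equivalent, so that it suffices to treat one of them. Since the holomorphic Euler characteristic of a line bundle on a cycle depends only on its Chern class and the cycle, the additivity of $\chi$ along the decomposition $Z=(Z-l)+l$ gives the identity
$$\chi(Z,\calL)-\chi(Z-l,\calL(-l))=(l',l)+\chi(l)=\chi(-l'+l)-\chi(-l').$$
Combined with $h^0(Z,\calL)-h^1(Z,\calL)=\chi(Z,\calL)$, this turns the bound for $h^1$ and the bound for $h^0$ into one another term by term, so I would work with the $h^0$ form. The overall plan is to prove the lower bound for \emph{every} $\calL\in r^{-1}(\mfl)$ first, and then, for generic $\calL$, to exhibit a single index $l=l_0$ at which the bound is sharp and which lies in the restricted index set $\{l : H^0(Z-l,\calL-l)_0\ne\emptyset\}$ of the last assertion.

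Second, for the inequality valid for all $\calL$, I would fix $l\in L$ with $0\le l\le Z$ and combine two restriction sequences. Tensoring the ideal-sheaf sequence of the subscheme $l\subset Z$ by $\calL$ gives
$$0\to \calL(-l)|_{Z-l}\to \calL|_{Z}\to \calL|_{l}\to 0,$$
and passing to $H^0$ yields $h^0(Z,\calL)\ge h^0(Z-l,\calL(-l))$. Next, writing $(Z-l)_1=\min(Z-l,Z_1)$ and restricting $\calL(-l)$ from $Z-l$ down to $(Z-l)_1$, the analogous sequence produces a surjection $H^1(Z-l,\calL(-l))\twoheadrightarrow H^1((Z-l)_1,\calL(-l)|_{(Z-l)_1})$, the last cohomology being top-dimensional on a curve; since $r(\calL)=\mfl$ we have $\calL(-l)|_{(Z-l)_1}=\mfl(-R_1(l))$, whence $h^1(Z-l,\calL(-l))\ge h^1((Z-l)_1,\mfl(-R_1(l)))$. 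Inserting this into $h^0(Z-l,\calL(-l))=\chi(Z-l,\calL(-l))+h^1(Z-l,\calL(-l))$ and maximising over $l$ produces the claimed lower bound for $h^0(Z,\calL)$.

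Third, I would establish the reverse inequality for generic $\calL$. Given generic $\calL\in r^{-1}(\mfl)$, let $l_0$ be the maximal cycle along which every global section of $\calL$ vanishes, so that $H^0(Z,\calL)=H^0(Z-l_0,\calL(-l_0))$ and the reduced bundle $\calL(-l_0)$ carries a section in general position on $Z-l_0$, i.e. $H^0(Z-l_0,\calL-l_0)_0\ne\emptyset$. For generic $\calL$ this fixed cycle $l_0$ is constant, and tensoring by $\calO_Z(-l_0)$ followed by restriction carries $\calL$ to a generic member $\calL(-l_0)$ of $r^{-1}(\mfl(-R_1(l_0)))$ on $Z-l_0$. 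The presence of a general-position section says exactly that this generic member lies in the image of the relative Abel map, so the pair $(l'-l_0,\mfl(-R_1(l_0)))$ is relative dominant, and Theorem \ref{th:dominantrel}(1) then forces $h^1(Z-l_0,\calL(-l_0))=h^1((Z-l_0)_1,\mfl(-R_1(l_0)))$. Hence $h^0(Z,\calL)=h^0(Z-l_0,\calL(-l_0))=\chi(Z-l_0,\calL(-l_0))+h^1((Z-l_0)_1,\mfl(-R_1(l_0)))$, which is precisely the $l=l_0$ term. Since the general inequality shows every term is at most $h^0(Z,\calL)$, the maximum is attained at $l_0$; this yields both the equality and, as $l_0$ belongs to the restricted index set, the sharper final assertion.

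The main obstacle is the transfer of genericity across the reduction in the third step. One must argue that for generic $\calL$ the maximal fixed cycle $l_0$ is independent of $\calL$ (a constructibility and semicontinuity argument over the finitely many candidate cycles $l$), that the induced $\calL(-l_0)$ is again generic in the fibre $r^{-1}(\mfl(-R_1(l_0)))$ over $(Z-l_0)_1$ — which requires the restriction $\pic^0(Z)\to\pic^0(Z-l_0)$ to behave well on fibres — and that relative dominance for the reduced data is legitimately available. Checking relative dominance concretely, whether via the numerical criterion of Theorem \ref{th:dominantrel}(2) for $(l'-l_0,\mfl(-R_1(l_0)))$ or directly from the density of the relative Abel image, is where the analytic input of \cite{R} (Corollary \ref{cor:smoothirreddim} together with the local submersivity of $\fr$ in Proposition \ref{lem:locsubmer}) must be fused with the general-position reduction; this coupling is the heart of the argument.
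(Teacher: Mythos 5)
First, a point of order: the paper you were given never proves Theorem \ref{rel} --- it is recalled verbatim from \cite{R} as background --- so there is no in-paper proof to compare against, and your proposal must be judged on its own merits. Your first two steps are correct and complete. The identity $\chi(Z,\calL)-\chi(Z-l,\calL(-l))=\chi(-l'+l)-\chi(-l')$ is a direct computation from Riemann--Roch on cycles (using $c_1(\calL)=l'$), so the $h^0$ and $h^1$ forms of \eqref{eq:genericLrel} are indeed equivalent; and the chain $h^0(Z,\calL)\geq h^0(Z-l,\calL(-l))=\chi(Z-l,\calL(-l))+h^1(Z-l,\calL(-l))\geq \chi(Z-l,\calL(-l))+h^1((Z-l)_1,\mfl(-R_1(l)))$, obtained from the decomposition sequence $0\to\calL(-l)|_{Z-l}\to\calL|_Z\to\calL|_l\to 0$ together with surjectivity of $H^1$ under restriction to subcycles (the kernel sheaf has no $H^2$ on a one-dimensional scheme), gives the inequality for \emph{every} $\calL\in r^{-1}(\mfl)$.

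The genuine gap is in your third step, and it is exactly what you yourself set aside as ``the heart of the argument''. The equality for generic $\calL$, and the sharper statement over the restricted index set, rest on three claims you assert but do not prove. (a) Constancy of the fixed cycle $l_0$ for generic $\calL$: this one is genuinely minor (finitely many candidate cycles, constructible strata, irreducible fiber). (b) That $\calL\mapsto \calL(-l_0)|_{Z-l_0}$ carries a dense subset of $r^{-1}(\mfl)$ to a dense subset of the fiber over $\mfl(-R_1(l_0))$: the two fibers are torsors under $K_1=\Ker\bigl(\pic^0(Z)\to\pic^0(Z_1)\bigr)$ and $K_2=\Ker\bigl(\pic^0(Z-l_0)\to\pic^0((Z-l_0)_1)\bigr)$, and the map is equivariant, so density of the image is \emph{equivalent} to surjectivity of the induced homomorphism $K_1\to K_2$. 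This does not follow from surjectivity of $\pic^0(Z)\to\pic^0(Z-l_0)$ alone: tracing through the sheaf sequences, the obstruction sits in $H^1$ of a sheaf supported on $Z_1-(Z-l_0)_1$, which has no a priori reason to vanish; this is a real statement requiring proof (or a reorganization of the induction, which is in effect what \cite{R} does). (c) The inference ``the generic member of the new fiber has a section in $H^0(\cdot)_0$, hence $(l'-l_0,\mfl(-R_1(l_0)))$ is relative dominant'' is only valid given (b): membership of a \emph{single} bundle in the Abel image says nothing about density of the intersection of the image with the fiber, which is what relative dominance demands. Two smaller points are also glossed over: Theorem \ref{th:dominantrel} requires $l'-l_0\in-\calS'$ (this does follow from $H^0(Z-l_0,\calL(-l_0))_0\neq\emptyset$, but it must be said), and if $|Z-l_0|\subsetneq|Z|$ the graph, the operator $R_1$ and the fiber all have to be re-set on the subgraph. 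Until (b) and (c) are supplied, the equality case and the final stronger assertion remain unproved.
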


In the following we recall the results from \cite{R} about relatively generic analytic structures:

Let's fix a a topological type, so a resolution graph $\mathcal{T}$ with vertex set $\calv$,

We consider a partition $\calv = \calv_1 \cup  \calv_2$ of the set of vertices $\calv=\calv(\Gamma)$. They define
two (not necessarily connected) subgraphs $\Gamma_1$ and $\Gamma_2$.
We call the intersection of an exceptional divisor from
$ \calv_1 $ with an exceptional divisor from  $ \calv_2 $ a
{\it contact point}. For any $Z\in L=L(\Gamma)$ we write $Z=Z_1+Z_2$,
where $Z_i\in L(\Gamma_i)$ is
supported in $\Gamma_i$ ($i=1,2$).

Furthermore, parallel to the restriction maps
$r_i : \pic(Z)\to \pic(Z_i)$ one also has the (cohomological) restriction operator
  $R_i : L'(\Gamma) \to L_i':=L'(\Gamma_i)$
(defined as $R_i(E^*_v(\Gamma))=E^*_v(\Gamma_i)$ if $v\in \calv_i$, and
$R_i(E^*_v(\Gamma))=0$ otherwise).
For any $l'\in L'(\Gamma)$ and any $\calL\in \pic^{l'}(Z)$ it satisfies
\begin{equation}\label{eq:CHERNREST}
c_1(r_i(\calL))=R_i(c_1(\calL)).
\end{equation}

In the following for the sake of simplicity we will denote $r = r_1$.

Furthermore let's have a fixed analytic type $\tX_1$ for the subtree $\mathcal{T}_1$ (if it is nonconnected, then an analytic type for each component).

Also for each vertex $v_2 \in \calv_2$ which has got a neighbour in $\calv_1$ let's fix a cut $D_{v_2}$ on $\tX_1$ along we glue the exceptional divisor $E_{v_2}$.
If for some vertex $v_2 \in \calv_2$ which has got a neighbour in $\calv_1$ there isn't a fixed cut, then we glue the exceptional divisor $E_{v_2}$ along a generic cut.

We know that the tubular neighbourhood of the exceptional divisors are analitycally taut, so let's plumb the tubular neihgbourhood
of the vertices in $\calv_2$  with the above conditons generically to the fixed resolution $\tX_1$, now we get a singularity $\tX$ and we say that $\tX$ is a relatively generic singularity
corresponding to the analytical structure $\tX_1$ and the cuts $D_{v_2}$ (For more precise definitions see \cite{R}).

We have the following theorem with this setup:

\begin{theorem}\label{rell}
Let's have the setup as above, so two resolution graphs $\mathcal{T}_1 \subset \mathcal{T} $ with vertex sets $\calv_1 \subset \calv$, where $\calv = \calv_1 \cup \calv_2$  and a fixed singularity $\tX_1$ for the resolution graph $\mathcal{T}_1$, and cuts
$D_{v_2}$ along we glue $E_{v_2}$ for all vertices $v_2 \in \calv_2$ which has got a neighbour in $\calv_1$.

Now let's assume that $\tX$ is a relatively generic analytic stucture on $\mathcal{T}$ corresponding to $\tX_1$.

Furthermore let's have an effective cycle $Z$ on $\tX$ and let's have $Z = Z_1 + Z_2$, where $|Z_1| \subset \calv_1$ and $|Z_2| \subset \calv_2$.

Let's have a natural line bundle $\calL$ on $\tX$, such that $c_1(\calL)= l' = - \sum_{v \in \calv'} a_v E_v$, with $a_v > 0, v \in \calv_2 \cap |Z|$, and let's denote $c_1 (\calL | Z) = l'_ m= - \sum_{v \in |Z|} b_v E_v$.

Furthermore let's denote $\mfl = \calL | Z_1$, then we have the following:

1)  We have $H^0(Z,\calL)_0\not=\emptyset$ if and only if $(l',\mfl)$ is relative dominant or equivalently:

\begin{equation}
\chi(-l'_m)- h^1(Z_1, \mfl) < \chi(-l'_m+l)-  h^1((Z-l)_1, \mfl(-R(l))),
\end{equation}
for all $0 \leq l \leq Z$.

2) 
\begin{equation}
h^1(Z, \calL) =  h^1(Z, \calL_{gen}) = \chi(-l'_m) - \min_{0 \leq l \leq Z}(\chi(-l'_m+l)-  h^1((Z-l)_1, \mfl(-R(l)))),                                   
\end{equation}
where $\calL_{gen}$ is a generic line bundle in $ r^{-1}(\mfl) \subset \pic^{l'_m}(Z)$.
\end{theorem}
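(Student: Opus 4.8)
The plan is to reduce the entire statement to the two theorems about \emph{generic} line bundles in the relative fibre, Theorem \ref{th:dominantrel} and Theorem \ref{rel}, by proving that on a relatively generic $\tX$ the natural line bundle $\calL$ restricts to a generic element of $r^{-1}(\mfl)$. First I record that $\calL|Z$ genuinely lies in $r^{-1}(\mfl)$: compatibility of restriction with the natural section gives $r(\calL|Z)=(\calL|Z)|Z_1=\calL|Z_1=\mfl$, while $c_1(\calL|Z)=l'_m$. Granting this genericity, part 2) is immediate, its second equality being Theorem \ref{rel}, which gives $h^1(Z,\calL_{gen})=\chi(-l'_m)-\min_{0\leq l\leq Z}(\chi(-l'_m+l)-h^1((Z-l)_1,\mfl(-R(l))))$ for $\calL_{gen}$ generic in $r^{-1}(\mfl)$; and in part 1) the equivalence between relative dominance of $(l',\mfl)$ and the displayed inequality for all $0\leq l\leq Z$ is Theorem \ref{th:dominantrel}(2), its $\chi$-terms being read off the Chern class $l'_m=c_1(\calL|Z)$ that $\calL$ carries on $Z$. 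Hence the only thing left to prove is the bridge $H^0(Z,\calL)_0\neq\emptyset\iff(l',\mfl)$ relative dominant, together with $h^1(Z,\calL)=h^1(Z,\calL_{gen})$.

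Two of these implications are soft. Because $h^1(Z,-)$ is upper semicontinuous along the affine fibre $r^{-1}(\mfl)\subset\pic^{l'_m}(Z)$, its generic value is the minimum, so automatically $h^1(Z,\calL)\geq h^1(Z,\calL_{gen})$ for the particular member $\calL|Z$. Likewise, $H^0(Z,\calL')_0\neq\emptyset$ cuts out a constructible subset of $r^{-1}(\mfl)$, namely $\im(c^{l'}(Z))\cap r^{-1}(\mfl)$, which therefore contains a dense open subset of its own closure; by the very definition of relative dominance this closure is all of $r^{-1}(\mfl)$ precisely when $(l',\mfl)$ is relative dominant, in which case a generic member of $r^{-1}(\mfl)$ lies in the image, while if it is not relative dominant no generic member can. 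Consequently both assertions follow at once from the single claim that on a relatively generic $\tX$ the natural bundle $\calL|Z$ occupies a \emph{generic} point of $r^{-1}(\mfl)$: genericity upgrades the semicontinuity inequality to the equality of part 2), and identifies the image-membership of $\calL|Z$ with that of a generic member, giving part 1).

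The heart of the matter, and the step I expect to be the main obstacle, is this genericity of the natural bundle, and it is exactly where the relative genericity of $\tX$ must be used. Keeping $\tX_1$ fixed — so that $\mfl=s_{Z_1}(R(l'))$ stays fixed — I would vary $\tX$ over the family of relatively generic structures obtained by plumbing the tubular neighbourhoods of the $\calv_2$-vertices onto $\tX_1$ along generic cuts. As these gluing parameters move, the natural bundle $s(l')$ moves in $\pic(\tX)$ while its restriction to $Z_1$ remains $\mfl$, so $\calL|Z$ sweeps out a subset of the single fibre $r^{-1}(\mfl)$, and the claim becomes that the resulting period map from gluing data to $r^{-1}(\mfl)$ is dominant. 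Proving this dominance is the crux: I would differentiate the gluing in the contact-point directions and show the differential surjects onto the tangent space $\ker\bigl(H^1(\calO_Z)\to H^1(\calO_{Z_1})\bigr)$ of $r^{-1}(\mfl)$, which is exactly the point at which the hypothesis $a_v>0$ for every $v\in\calv_2\cap|Z|$ enters, guaranteeing that $l'$ twists nontrivially along each $\calv_2$-component meeting $Z$ so that no direction transverse to the fixed restriction $\mfl$ is missed. This surjectivity of the period differential is the relative analogue of the genericity of natural line bundles on absolutely generic singularities established in \cite{NNA2}, and it is the one genuinely computational ingredient that the soft arguments above cannot supply.
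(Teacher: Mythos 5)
You should first be aware that this paper contains no proof of Theorem \ref{rell} to compare against: Section 3 is explicitly a summary of results imported from \cite{R}, and Theorem \ref{rell} is quoted there as a black box (it is then only \emph{used}, in Sections 4 and 5). So your proposal can only be judged as a would-be self-contained proof, and as such it has a genuine gap. Your reductions are correct but they are the routine part: granting that $\calL|Z$ occupies a generic point of $r^{-1}(\mfl)$ — generic both for the jumping of $h^1$ and for membership in $\im(c^{l'}(Z))\cap r^{-1}(\mfl)$ — parts 1) and 2) do follow from Theorem \ref{th:dominantrel} and Theorem \ref{rel} by semicontinuity and constructibility, exactly as you say. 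But that genericity statement is not an auxiliary lemma; it is the entire content of the theorem. Everything Theorem \ref{rell} asserts beyond Theorems \ref{th:dominantrel} and \ref{rel} is precisely that the natural line bundle of a relatively generic structure is cohomologically indistinguishable from a generic element of the relative fibre, and this is the claim you explicitly leave unproven ("I would differentiate\dots and show\dots"). Relocating the whole difficulty into an unproved claim makes the proposal an outline of a strategy, not a proof.

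Moreover, the strategy as you formulate it has a technical defect that must be repaired before the differential computation even makes sense. When the gluing parameters move, the analytic structure of $\tX$ moves, and with it the non-reduced space $Z$ (only $Z_1$ is fixed, since $\tX_1$ is); hence $\pic^{l'_m}(Z)$ and the fibre $r^{-1}(\mfl)$ move as well. The natural bundles therefore do not sweep out a subset of a \emph{single} fibre: one has to set up a relative Picard family over the space of gluings, together with the section given by the natural line bundles. Worse, the locus to be avoided (where $h^1$ exceeds the generic value of Theorem \ref{rel}, or where a bundle leaves the closure of the Abel image) is itself a closed subfamily varying with the parameter, and dominance of a trivialized period map onto a fixed fibre does not formally imply that a \emph{section} avoids a parameter-dependent bad locus — the graph of a dominant map can perfectly well lie inside a proper closed subfamily. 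Showing that the natural-bundle section is not contained in the jumping locus is exactly the Laufer-type deformation computation that \cite{R} (and, in the absolute case, \cite{NNA2}) carries out, and it is the ingredient your argument would still owe.
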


\section{Dimensions of images of relative Abel maps}

In the following we want to generalise the theorems in \cite{NNAD} about the values $\dim( \im(c^{l'}(Z)))$ for generic singularities to the case of relatively generic singularities,
 in a more general setup:

\begin{theorem}
Let $\mathcal{T}_1 \subset \mathcal{T}$ be $2$ resolution graphs and let $\tX_1$ be a fixed singularity with resolution graph $\mathcal{T}_1$ and assume, that $\tX_1 \subset \tX$ is a singularity with resolution graph $\mathcal{T}$.
Let's have a fixed line bundle $\calL$ on $\tX_1$ and a Chern class $l' \in - S'$, such that $c^1(\calL)  = R(l')$.

For an arbitrary cycle $B$ on $\tX$ let's denote $g(B, \calL, l') = h^1(B, \calL')$, where $\calL'$ is a generic line bundle in $r^{-1}(\calL | B_1) \in \pic^{l'}(B)$.
Notice, that by our theroem on relatively generic line bundles, this is an analytic invariant depending just on $\tX_1$ and $\calL$.

For an arbitrary cycle $Z$  on $\tX$ and let's denote:
\begin{equation}
b_{Z, \calL,  l'} = \max_{Z' \leq Z} \left( \sum_{1 \leq i \leq n} T(Z'_i, \calL,  l') \right),
\end{equation}
where $T(Z'_i, \calL,  l') = g(Z'_i, \calL,  l') + D(Z'_i, \calL,  l')$, $|Z'_1|, \cdots, |Z'_n|$ are the connected components of $|Z'|$ with $\sum_{1 \leq i \leq n}Z'_i = Z'$, and $D(Z'_i, \calL,  l') = 1$ if $(l', \calL | (Z'_i)_1)$ is not relative dominant on $Z'_i$, and $0$ otherwise. 

Assume in the following, that $\eca^{ l', \calL}(Z)$ is nonempty.

1) Let's have a generic line bundle $\mfl \in c^{l'}(\eca^{ l', \calL}(Z))$, then  one has $h^1(Z, \mfl) = h^1(Z_1, \calL) + h^1(Z) - h^1(Z_1) - \dim(c^{l'}(\eca^{ l', \calL}(Z)) )$.

2)  We have the inequality $h^1(Z, \mfl)  \geq  b_{Z, \calL, l'}$.

3)   Assume, that the singularity $\tX$ is relatively generic corresponding to $\tX_1$, then we have the equality $h^1(Z, \mfl) =  b_{Z, \calL,  l'}$.
\end{theorem}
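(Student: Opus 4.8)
The plan is to prove the three statements in order, since (1) is essentially a dimension bookkeeping identity, (2) is a lower bound obtained by exhibiting enough ``independent'' cohomological obstructions coming from subcycles, and (3) is the matching upper bound that genuinely uses the relative genericity of $\tX$.

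For part (1), I would start from the diagram and from Corollary \ref{cor:smoothirreddim}. The source $\eca^{l',\calL}(Z)$ is, by the results quoted from \cite{R}, smooth and irreducible of dimension $h^1(Z_1,\calL)-h^1(Z_1,\calO_{Z_1})+(l',Z)$. The Abel map $c^{l'}$ restricted to this subvariety has image $c^{l'}(\eca^{l',\calL}(Z))$, a constructible subset of $\pic^{l'}(Z)$; over a generic point $\mfl$ of the image the fiber has dimension equal to $\dim\eca^{l',\calL}(Z)-\dim c^{l'}(\eca^{l',\calL}(Z))$. On the other hand the fiber of the Abel map over $\mfl$ is (projectivized) $H^0(Z,\mfl)_0$, whose dimension is controlled by $h^0(Z,\mfl)=\chi(-l')+h^1(Z,\mfl)$, and $\chi(-l')$ reorganizes into $(l',Z)$-type terms. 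Carefully matching the two expressions for the fiber dimension and using $h^0-h^1=\chi$ together with $h^1(\calO_{Z})=h^1(Z)$ should give the stated identity $h^1(Z,\mfl)=h^1(Z_1,\calL)+h^1(Z)-h^1(Z_1)-\dim c^{l'}(\eca^{l',\calL}(Z))$. This is the routine part.

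For part (2), the idea is that $h^1(Z,\mfl)$ for generic $\mfl$ in the image can only drop below the ``fully generic'' relative value $g(Z,\calL,l')$ in a controlled way, and that each connected component of each subcycle $Z'\le Z$ contributes a potential unit of extra $h^1$. I would pick the subcycle $Z'\le Z$ achieving the maximum in $b_{Z,\calL,l'}$ and, working on each connected component $Z'_i$ separately, use the exact sequence relating $\calO_Z$, $\calO_{Z'}$ and $\calO_{Z-Z'}$-type sheaves (restriction sequences) together with the semicontinuity of $h^1$ to bound $h^1(Z,\mfl)$ from below by a sum of $g(Z'_i,\calL,l')$ terms. The $D(Z'_i,\calL,l')$ term, which adds $1$ exactly when $(l',\calL|(Z'_i)_1)$ fails to be relative dominant, is the standard ``non-dominance contributes one extra dimension to the cokernel'' phenomenon, and this is where Theorem \ref{th:dominantrel} and the relative dominance criterion enter: non-dominance on $Z'_i$ means the relevant restriction/differential is not surjective, producing the $+1$. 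Additivity over connected components is clear since cohomology splits over disjoint supports.

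For part (3), the reverse inequality $h^1(Z,\mfl)\le b_{Z,\calL,l'}$ is the main obstacle, and it is precisely here that the hypothesis that $\tX$ is relatively generic is indispensable. I would argue by induction on $Z$ (or on the number of blow-ups / on $(l',Z)$), using Theorem \ref{rell} part (2), which computes $h^1(Z,\calL)$ for generic $\calL\in r^{-1}(\mfl)$ as $\chi(-l'_m)-\min_{0\le l\le Z}(\chi(-l'_m+l)-h^1((Z-l)_1,\mfl(-R(l))))$, and then translate the minimizing cycle $l$ into the maximizing subcycle $Z'$ in the definition of $b$. The key structural point to establish is that the generic $\mfl$ in the \emph{image} of the relative Abel map coincides, for the purpose of computing $h^1(Z,\mfl)$, with the generic line bundle in the relative fiber $r^{-1}(\mfl)$ up to exactly the dominance defect, so that Theorem \ref{rell} applies and the min-formula can be matched term by term with the sum $\sum_i T(Z'_i,\calL,l')$. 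I expect the hardest step to be showing that the minimum in the Theorem \ref{rell} formula is attained at a cycle of the form $l=Z-Z'$ with $Z'$ a union of connected subcycles and that the dominance/non-dominance dichotomy of each component accounts exactly for the integer contributions $D(Z'_i,\calL,l')$; making this bookkeeping precise, and controlling how relative genericity forces the different components to behave independently (no unexpected cancellation in the $h^1$), is where the real work lies.
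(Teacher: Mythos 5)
Your part (1) is correct, and it takes a genuinely different (and somewhat more elementary) route than the paper: you count the generic fiber of the restricted Abel map as $\mathbb{P}(H^0(Z,\mfl)_0)$, so its dimension is $h^0(Z,\mfl)-1$, and combine this with $\dim\eca^{l',\calL}(Z)=h^1(Z_1,\calL)-h^1(Z_1)+(l',Z)$ (Corollary \ref{cor:smoothirreddim}) and Riemann--Roch $h^0(Z,\mfl)=1-h^1(Z)+(l',Z)+h^1(Z,\mfl)$; this does yield the stated identity. The paper instead computes the rank of the tangent map $T_Dc^{l'}$ and splits it through the projection $\pi:\pic^{l'}(Z)\to\pic^{R(l')}(Z_1)$. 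Part (2) follows the paper's decomposition (restrict to $Z'$, split into connected components, semicontinuity gives the $g$-terms), but your mechanism for the extra $+1$ --- ``non-dominance means the relevant differential is not surjective'' --- is not the right one and would not survive scrutiny: the correct argument is that $\mfl|Z'_i$ lies in $\im(c^{l'}(Z'_i))$, i.e.\ $H^0(Z'_i,\mfl|Z'_i)_0\neq\emptyset$, while inside the dense open locus of $r^{-1}(\calL|(Z'_i)_1)$ where $h^1$ equals the generic value $g$, the condition $H^0(\cdot)_0\neq\emptyset$ is open; so $h^1(Z'_i,\mfl|Z'_i)=g$ would force the generic bundle of the fiber to lie in the Abel image, contradicting $D(Z'_i,\calL,l')=1$. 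This is a fixable local gap.

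The genuine gap is part (3), which is the heart of the theorem. Your plan is to apply Theorem \ref{rell} to the generic $\mfl\in c^{l'}(\eca^{l',\calL}(Z))$ and match its minimum formula ``term by term'' with the maximum defining $b_{Z,\calL,l'}$. This cannot work as stated: Theorem \ref{rell} computes $h^1$ for a bundle generic in the \emph{whole} fiber $r^{-1}(\calL)$, whereas $\mfl$ is generic only in $\im(c^{l'})\cap r^{-1}(\calL)$, and these loci have the same generic $h^1$ exactly when $(l',\calL)$ is relative dominant. In the non-dominant case --- the only interesting one --- quantifying how much larger $h^1$ is on this proper subvariety \emph{is} the assertion to be proved, so ``matching the formulas up to the dominance defect'' is circular, and your sentence ``this is where the real work lies'' defers precisely the content of the theorem. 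What the paper actually does, and what has no counterpart in your outline, is: (i) reduce to the case where every form in $H^0(\tX,K+Z)/H^0(\tX,K)$ has pole of order at most one along each $E_u$, $u\in\calv\setminus\calv_1$, by passing to the minimal cycle $Z^*\leq Z$ with $H^1(Z^*)=H^1(Z)$ and checking $b$ is unchanged; (ii) in that case identify $\dim c^{l'}(\eca^{l',\calL}(Z))=e_Z(I)-t$, where $t$ is the minimal value of $(l'_2,Z)$ over correction classes $l'_2\in-\calS'$ supported on $I=\calv\setminus\calv_1$ for which $(l'+l'_2,\calL)$ becomes relative dominant; (iii) bound $t\leq T(Z,\calL,l')-h^1(Z_1,\calL)+1$ by an algorithm subtracting one $E_u^*$ at a time, whose strict decrease of the defect rests on a lemma that the set of minimizing cycles in the Theorem \ref{rell} formula has a maximal element and that its $\calv_2$-parts admit a minimal element; (iv) an induction on $h^1(Z)$ with generic blow-ups (plus blow-up invariance of $b$) to treat the case $(l',E_u)=0$ for $u\in\calv_2$; and (v) the reduction of general $l'$ via the splitting $l'=l'_1+l'_2$ and the twisted-cohomology formula (Corollary \ref{cor:formL}) from \cite{NNAD}. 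Steps (ii)--(iii) are the essential new idea --- converting the dimension of the relative Abel image into a minimal ``dominance-restoring'' Chern class --- and without them (or a substitute) the upper bound $h^1(Z,\mfl)\leq b_{Z,\calL,l'}$ remains unproved.
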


\begin{proof}

To prove part 1) let's have a generic smooth point $D \in \eca^{ l', \calL}(Z) \subset \eca^{l'}(Z)$, where we have $\dim(T_D(\eca^{l'}(Z))) = (l', Z)$ and $\dim(T_D(\eca^{ l', \calL}(Z))) = (l', Z) - h^1(Z_1) + h^1(Z_1, \calL)$.
Let's denote the restriction of the map $c^{l'}$ to the spae $\eca^{ l', \calL}(Z)$ by $c^{l', \calL}$.

Now we want to compute $\dim(\im(T_D(c^{l'}(Z))) )$ in the following, since we have $h^1(Z,  \mfl) = h^1(Z) - \dim(\im(T_D(c^{l'}(Z))) )$ from \cite{NNA1}.

Let's denote the projection map $\pic^{l'}(Z) \to \pic^{R(l')}(Z_1)$ by $\pi$, and notice, that we have $ \dim(\im(T_D( \pi \circ c^{l'}(Z)  )) ) = h^1(Z_1) - h^1(Z_1, \calL)$.

It means, that $\dim( \pi(  \im(T_D(c^{l'}(Z))) )) =  h^1(Z_1) - h^1(Z_1, \calL)$.

Now we want to compute $ \dim(\im(T_D(c^{l'}(Z))) \cap \pi^{-1} (\calL))$.

We know, that $\dim(\eca^{l'}(Z)) - \dim(\eca^{l', \calL}(Z)) = h^1(Z_1) - h^1(Z_1, \calL)$ and $D$ is a smooth point of $\eca^{ l', \calL}(Z)$ so we get that:

\begin{equation*}
\dim( \pi(  \im(T_D(c^{l'}(Z))) )) = \dim(T_D(\eca^{l'}(Z))) - \dim(T_D(\eca^{ l', \calL}(Z))).
\end{equation*}

We know, that the tangent map of $c^{l'}(Z)$ brings $T_D(\eca^{ l', \calL}(Z))$ to $\pi^{-1} (\calL)$, so it means, that the normal space at $D \in \eca^{l'}(Z)$
to the submanifold $\eca^{ l', \calL}(Z)$ goes bijectively to $ \pi(  \im(T_D(c^{l'}(Z))) )$ by the tangent map of $c^{l'}(Z)$ composed by $\pi$.

It means, that we have:

\begin{equation*}
\im(T_D(c^{l'}) ) \cap \pi^{-1} (\calL) = \im(T_D(c^{l', \calL})).
\end{equation*}

On the other hand we know, that $\dim( \im(T_D(c^{l', \calL})) ) = \dim(c^{l'}(\eca^{ l', \calL}(Z)) )$, which means that:

\begin{equation*}
\dim( \im(T_D(c^{l'})) ) = \dim( \pi(  \im(T_D(c^{l'}(Z))) )) + \dim( \im(T_D(c^{l'}) ) \cap \pi^{-1} (\calL) ) .
\end{equation*}

\begin{equation*}
\dim(\im(T_D(c^{l'})) ) =  \dim(c^{l'}(\eca^{ l', \calL}(Z)) ) +  h^1(Z_1) - h^1(Z_1, \calL).
\end{equation*}

\begin{equation*}
h^1(Z, \mfl) =  h^1(Z_1, \calL) + h^1(Z) - h^1(Z_1) - \dim(c^{l'}(\eca^{l', \calL}(Z)) ).
\end{equation*}

For part 2) let's have an arbitrary cycle $Z' \leq Z$ and let the connected components of $|Z'|$ be $|Z'_1|, \cdots, |Z'_n|$, now we have to prove, that:

\begin{equation*}
h^1(Z, \mfl) \geq \sum_{1 \leq i \leq n} T(Z'_i, \calL,  l').
\end{equation*}

Notice first, that $h^1(Z,  \mfl) \geq h^1(Z', \mfl) = \sum_{1 \leq i \leq n} h^1(Z'_i, \mfl  | Z'_i)$, so we only have to prove, that $h^1(Z'_i, \mfl) \geq  g(Z'_i, \calL,  l') + D(Z'_i, \calL,  l')$.

The statement $h^1(Z'_i, \mfl) \geq  g(Z'_i, \calL,  l')$ is always true by semicontinuity and  if $ D(Z'_i, \calL,  l') = 1$, then if $\calL'$ is a generic line bundle in $r^{-1}(\calL_1) \subset \pic^{l'}(Z'_i)$, where $\calL_1 = \calL | (Z'_i)_1 $, then $H^0(Z'_i, \calL')_0 = \emptyset$ and $H^0(Z'_i, \mfl | Z'_i)_0 \neq \emptyset$ which really means the strong inequality $h^1(Z'_i, \mfl  | Z'_i) \geq  g(Z'_i, \calL,  l') + 1$.

This means, that $h^1(Z'_i, \mfl  | Z'_i) \geq  g(Z'_i, \calL,  l') + D(Z'_i, \calL,  l')$ is always true and part 2) is proved.

For part 3) notice first that if the singularity $\tX$ is relatively generic corresponding to $\tX_1$, then the nonemptyness of  $\eca^{ l', \calL}(Z)$ is equivalent to $H^0(Z_1, \calL)_0 \neq \emptyset$.

Now we prove part 3) first in the case, when every differential form in $\frac{H^0(\tX, K + Z)}{H^0(\tX, K)}$ has got a pole on $E_u$ of order at most $1$ if $u \in \calv \setminus \calv_1$
is an arbitrary vertex.

We want to  compute $\dim(c^{l'}(\eca^{ l', \calL}(Z)))$ in the following.

Now let's denote by $Z^*$ the minimal cycle such that $H^1(Z^*) = H^1(Z)$ and $0 \leq Z^* \leq Z$.

It's obvious in this case that $Z^*_u = 0$ or $Z^*_u = 1$ for every vertex $u \in \calv \setminus \calv_1$.

Let's denote $Z^*_1 = \min(Z^*, Z_1)$ and the line bundle $\calL^* = \calL | Z^*_1$.
Notice, that we have immediately, that $\dim(c^{l'}(\eca^{l', \calL}(Z))) = \dim(c^{l'}(\eca^{l', \calL^*}(Z^*)))$.

Now assume, that there is a cycle $Z' \leq Z$, with connected components  $|Z'_1|, \cdots, |Z'_n|$, then let's denote $Z'' = \min(Z^*, Z')$ and $Z''_j = \min(Z^*, Z'_j)$, we know, that $Z'' = \sum_{1 \leq j \leq n} Z''_j$.

However the cycles $Z''_j$ may not be connected, so let's denote the connected components of $Z''_j$ by $Z''_{j, k}, 1 \leq k \leq t_j$.

Notice, that we have $g(Z'_j, \calL,  l') = \sum_{1 \leq k \leq t_j} g(Z''_{j, k}, \calL^*,  l') $ and $\sum_{1 \leq k \leq t_i}D(Z''_{j, k} , \calL^*,  l') \geq D(Z'_{j} , \calL,  l')$, which means, that $b_{Z, \calL,  l'} \leq  b_{Z^*, \calL^*,  l'}$.

On the other hand $b_{Z, \calL,  l'} \geq  b_{Z^*, \calL^*,  l'}$ is trivial by the definition, so we get, that $b_{Z, \calL,  l'} =  b_{Z^*, \calL^*,  l'}$, this means, that we are enough to prove the statement in the case, when $Z = Z^*$.

If there is a vertex $u \in \calv \setminus \calv_1$, suh that $Z_u = 0$ and the support $|Z|$ is disconnected, then we can prove the statement independently for
the connected components of $Z$.

So assume in the following, that $Z_u = 1$ for every vertex $u \in \calv \setminus \calv_1$, and let's denote $I = \calv \setminus \calv_1$.

Notice first, that if $u \in I$, then every differential form in $\frac{H^0(\tX, K + Z)}{H^0(\tX, K )}$ has got a pole on $E_{u}$ of order at most $1$, 
which means, that $\dim(\im(c^{-E_{u}^*}(Z)))= 1$ and $\dim(A(\im(c^{-E_{u}^*}(Z)))) = e_{Z}(u)$, where $A(\im(c^{-E_{u}^*}(Z)))$ is the affine clousure of
$\im(c^{-E_{u}^*}(Z))$.

We know from part 2), that $e_{Z}(I) - \dim(c^{l'}(\eca^{ l', \calL}(Z))) \geq b_{Z, \calL,  l'} -  h^1(Z_1, \calL)$, so in the following we are enough to prove the other direction
$e_{Z}(I)  - \dim(c^{l'}(\eca^{ l', \calL}(Z))) \leq b_{Z, \calL,  l'} - h^1(Z_1, \calL)$.

Notice, that if $U \subset r^{-1}(\calL)$ is an irreducible variety, than $\dim(U) =  e_{Z}(I) - t$, where $t$ is the minimal nonnegative integer such that there exists a Chern class
$l'_2 \in - S'$, which is supported on $I$, $(l'_2, Z) = t$ and $\dim(U \oplus \im(c^{l'_2}(Z))) =  e_{Z}(I)$.

It means, that $\dim(c^{l'}(\eca^{ l', \calL}(Z))) =  e_{Z}(I) - t$, where $t$ is the minimal nonnegative integer such that there exists a Chern class
$l'_2 \in - S'$, which is supported on $I$, $(l'_2, Z) = t$  and  $(l' + l'_2, \calL)$ is relative dominant on $Z$.

We prove in the following, that there exists  a Chern class $l'_2 \in - S'$,  which is supported on $I$, $(l'_2, Z) \leq T(Z, \calL,  l') - h^1(Z_1, \calL) $ and
 $(l' + l'_2, \calL)$ is relative dominant on $Z$.
This proves our claim since $b_{Z, \calL,  l'} \geq T(Z, \calL,  l')$.

Now if $(l' , \calL)$ is relative dominant on $Z$, then we can choose $l'_2 = 0$, which means, that $(l'_2, Z) = 0$, on the other hand in this case $T(Z, \calL,  l') - h^1(Z_1, \calL) = 0$ by definition, which proves our claim.

So assume, that $(l' , \calL)$ is not relatively dominant on $Z$, then we get that $T(Z, \calL,  l') = g(Z, \calL,  l') + 1 = \chi(-l') - \min_{0 \leq l \leq Z}( \chi(-l' +l) - h^1((Z-l)_1, \calL( - R(l))  )   ) + 1$.

So we have to find an appropriate Chern class $l'_2 \in - S'$, such that $(l'_2, Z) \leq (\chi(-l') - h^1(Z_1, \calL)) - \min_{0 \leq l \leq Z}( \chi(-l' +l) - h^1((Z-l)_1, \calL( - R(l))  )   ) + 1$.

Let's prove now the following lemma:

\begin{lemma}
Let's have two resolution graphs $\mathcal{T}_1 \subset \mathcal{T}$ and two singularities $\tX_1 \subset \tX$ corresponding to them, a cycle $Z$ on $\tX$, a line bundle $\calL$ on the
subsingularity $\tX_1$ and a Chern class $l' \in -S'$ such that $c^1(\calL) = R(l')$.

Let's look at the cycles $0 \leq l_m \leq Z$, such that $\min_{0 \leq l \leq Z}( \chi(-l' +l) - h^1((Z-l)_1, \calL( - R(l))  )  ) = \chi(-l' +l_m) - h^1((Z-l_m)_1, \calL( - R(l_m))) $ and let's denote the set of them by $L_m$, then $L_m$ has got a maximal element.

Let's look at the cycles $0 \leq l_{m, 2} \leq Z_2$, for which there exists a cycle $0 \leq l_m \leq Z$, such that $(l_m)_2 = l_{m, 2}$ and $\min_{0 \leq l \leq Z}( \chi(-l' +l) - h^1((Z-l)_1, \calL( - R(l))  )  ) = \chi(-l' +l_m) - h^1((Z-l_m)_1, \calL( - R(l_m))) $ and let's denote the set of them by $L_{m, 2}$, then $L_{m, 2}$ has got a minimal element.

\end{lemma}
\begin{proof}

Let's look at a generic line bundle in $r^{-1}(\calL) \subset \pic^{l'}(Z)$ and let's denote it by $\mfl$, notice that if $0 \leq l \leq Z$ is an arbitrary cycle, then $\mfl(-l)| Z-l$
is a generic line bundle in $r^{-1}(\calL(-R(l))) \subset \pic^{l'}(Z-l)$, so by the main theorem about relatively generic line bundles from \cite{R} we get:

\begin{equation}
h^0(Z-l, \mfl(-l)| Z-l) =\chi(-l' + l)  + \chi(Z-l,  \mfl(-l))  - \min_{0 \leq l _2 \leq Z-l}( \chi(-l' +l+ l_2) - h^1((Z-l - l_2)_1, \calL( - R(l + l_2))  )  ) .
\end{equation}

We know, that there is a unique cycle $0 \leq l_M \leq Z$ such that $h^0(Z-l_M, \mfl(-l_M)| Z-l_M) = h^0(Z, \mfl)$ and $H^0(Z-l_M, \mfl(-l_M)| Z-l_M)_0 \neq 0$.

From the previous equation we get that $ l_M$  is the maximal cycle in the set $L_m$, so we have proved that $L_m$ has got a maximal element.

In the following we prove the existence of a minimal element in $L_{m, 2}$:

Let's have an arbitrary cycle $0 \leq Z' \leq Z$ such that $(Z')_1 = Z_1$, then we know, that $\mfl | Z'$ is a  generic line bundle in $r^{-1}(\calL | Z'_1) \subset \pic^{l'}(Z')$, which means, that:

\begin{equation}
h^1(Z', \mfl | Z') =\chi(-l')   - \min_{0 \leq l  \leq Z'}( \chi(-l' +l) - h^1((Z'-l)_1, \calL( - R(l))) ).
\end{equation}

It means, that if $(Z')_1 = Z_1$, then we have $h^1(Z', \mfl | Z') = h^1(Z, \mfl )$ if there exists an element $ l_{m, 2} \in L_{m, 2}$, such that $ l_{m, 2} \leq Z'$.

We know, that if $0\leq Z', Z'' \leq Z$ are two cycles, such that $h^1(Z', \mfl | Z') = h^1(Z, \mfl )$ and $h^1(Z'', \mfl | Z'') = h^1(Z, \mfl )$, then we have $h^1(\min(Z'', Z'), \mfl | \min(Z'', Z')) = h^1(Z, \mfl )$.

Now assume, that the set $L_{m, 2}$ hasn't got a minimal element, it means, that there are two different elements $l_{m, a}, l_{m, b} \in L_{m, 2}$, such that if $l < l_{m, a}$, 
then $l \notin L_{m, 2}$ and if $l < l_{m, b}$, then $l \notin L_{m, 2}$.

Now we get, that $h^1(Z_1 + l_{m, a}, \mfl | Z_1 + l_{m, a}) = h^1(Z, \mfl )$ and $h^1(Z_1 + l_{m, b}, \mfl | Z_1 + l_{m, b}) = h^1(Z, \mfl )$, but we have
$h^1(Z_1 + \min( l_{m, a},l_{m, b}) , \mfl | Z_1 +\min( l_{m, a},l_{m, b})) < h^1(Z, \mfl )$, which is a contradiction, this proves the second statement of the lemma.
\end{proof}

Now we determine $l'_2$ recursively, we will determine a set of Chern classes $l''_t$ recursively, such that $l''_0 = 0$ and $l''_t = l''_{t-1} - E_u^*$ for some vertex $u \in I$.

Now, suppose we have determined $l''_{t-1}$ and $(l' + l''_{t-1}, \calL)$ is relative dominant on $Z$, then we stop the algorithm and set $l'_2 = l''_{t-1}$.

Suppose on the other hand, that $(l' + l''_{t-1}, \calL)$ is not relative dominant on $Z$, then we have:

\begin{equation}
\min_{0 \leq l \leq Z}( \chi(-l'  - l'_{t-1} +l) - h^1((Z-l)_1, \calL( - R(l))  )   ) < \chi(-l'  - l'_{t-1}) - h^1(Z_1, \calL).
\end{equation}

Let's look at the cycles $0 \leq l_{m, 2} \leq Z_2$, such that there exists a cycle $0 \leq l_m \leq Z$, for which $(l_m)_2 = l_{m, 2}$ and $\min_{0 \leq l \leq Z}( \chi(-l'  - l'_{t-1} +l) - h^1((Z-l)_1, \calL( - R(l))  )  ) = \chi(-l'  - l'_{t-1}+l_m) - h^1((Z-l_m)_1, \calL( - R(l_m))) $ and let's denote the set of them by $L_{m, 2}$, then by our previous lemma $L_{m, 2}$ has got a minimal element, let's denote it by $l_{2, t-1}$.

Let's choose an arbitrary vertex $u \in |l_{2, t-1}|$ and set $l''_t = l''_{t-1} - E_u^*$.

Let's denote in the following $d_t = \chi(-l'  - l'_{t}) - h^1(Z_1, \calL) - \min_{0 < l \leq Z}( \chi(-l'  - l'_{t} +l) - h^1((Z-l)_1, \calL( - R(l))  )   ) $, where we have $d_0 =  T(Z, \calL,  l') - h^1(Z_1, \calL) $ and $d_t < 0$ if and only if $(l' + l''_{t}, \calL)$ is relatively dominant on $Z$, so in particular $d_0 \geq 0$.

We claim in the following, that $d_t < d_{t-1}$ in case both of them are defined.

Indeed we have to prove that if $0 \leq l \leq Z$ is an arbitrary cycle then $\chi(-l'  - l'_{t}) - h^1(Z_1, \calL) - ( \chi(-l'  - l'_{t} +l) - h^1((Z-l)_1, \calL( - R(l))  )   ) < d_{t-1}$.

If $l \ngeq l_{2, t-1}$, then we have $ \chi(-l'  - l'_{t-1}) - h^1(Z_1, \calL) - ( \chi(-l'  - l'_{t-1} +l) - h^1((Z-l)_1, \calL( - R(l))  )   ) < d_{t-1}$ and we obviously have

\begin{equation}
(-l'  - l'_{t-1}, l) - h^1(Z_1, \calL) + h^1((Z-l)_1, \calL(- R(l))  )  \geq (-l'  - l'_{t}, l) - h^1(Z_1, \calL) + h^1((Z-l)_1, \calL(- R(l))  )  ,
\end{equation}
which indeed implies that $\chi(-l'  - l'_{t}) - h^1(Z_1, \calL) - ( \chi(-l'  - l'_{t} +l) - h^1((Z-l)_1, \calL( - R(l))  )   ) < d_{t-1}$.

Assume on the other hand that $l \geq l_{2, t-1}$, then we have $ \chi(-l'  - l'_{t-1}) - h^1(Z_1, \calL) - ( \chi(-l'  - l'_{t-1} +l) - h^1((Z-l)_1, \calL( - R(l))  )   ) \leq d_{t-1}$
and we have
\begin{equation}
(-l'  - l'_{t-1}, l) - h^1(Z_1, \calL) + h^1((Z-l)_1, \calL(- R(l))  )  > (-l'  - l'_{t}, l) - h^1(Z_1, \calL) + h^1((Z-l)_1, \calL(- R(l))  )  ,
\end{equation}
because $(-E_u^*, l) \geq (-E_u^*, l_{2, t-1}) \geq 1$, which proves our claim in this case too.

Now assume, that the algorithm stops at $t = k$, this means, that $k \leq d_0 + 1 = T(Z, \calL,  l') - h^1(Z_1, \calL) + 1$.

Now if we denote $l''_k = l'_2$, then $(l' + l'_{2}, \calL)$ is relatively dominant on $Z$ and $(l'_2, Z) = k  \leq T(Z, \calL,  l') - h^1(Z_1, \calL) + 1$.

This proves part 3) in the case, when every differential form in $\frac{H^0(\tX, K + Z)}{H^0(\tX, K)}$ has got a pole on $E_u$ of order at most $1$ if $u \in \calv \setminus \calv_1$
is an arbitrary vertex.

In the following we prove part 3) in the special case when $(l', E_u) = 0$ for all vertices $u \in \calv_2$ by induction on $h^1(Z)$.

If $h^1(Z) = 0$ then the statement is trivial, so assume in the following, that $h^1(Z) = t > 0$ and the statement is true for $h^1(Z) < t$.

As before we can assume, that $H^0(\tX, K+ Z)_0 \neq 0$.

The subgraph induced by the vertex set $\calv_2$ may not be connected, let the connected components be $\calv_{2, i}, 1 \leq i \leq r$.

Notice first that we want to compute $h^1(Z, \mfl)$, where $\mfl \in c^{l'}(\eca^{l', \calL}(Z))$ is generic.

This means, that we glue the tubular neihborhoods of the exceptional dvisors $E_u, u \in \calv_2$ generically to the fixed singularity $\tX_1$ and then we pick a generic divisor
$D \in (c^{l'})^{-1}(\calL) \in \eca^{R_1(l')}(Z_1)$ and compute $h^1(Z, D)$.

However we can reverse this process by fixing a generic divisor $D$ and then glue the tubular neighborhoods generically.
If we compute $h^1(Z, D)$ this way, then we get the same result as before obviously.
So in the following let's fix a generic divisor $D \in (c^{l'})^{-1}(\calL) \in \eca^{R_1(l')}(Z_1)$.

In the following for every component $\calv_{2, i},  1 \leq i \leq r$ we choose a vertex $u_i \in \calv_{2, i}$.

We know, that the differential forms in $\frac{H^0(\tX, K + Z)}{H^0(\tX, K)}$ have got a pole on $E_{u_i}$ of order at most $Z_{u_i} \geq 1$ and there exist differential forms in it
which have got pole on the exceptional divisor $E_{u_i}$ of order $Z_{u_i}$, let's denote $k_i = Z_{u_i}$.

Now let's blow up the singularity $\tX$ along the divisors $E_{u_i}$ sequentially $k_i-1$ times at generic points and let the new singularity be $\tX_{new}$ with resolution graph $\mathcal{T}_{new}$ and let the new exceptional divisors be $E_{w_{i, 1}}, \cdots E_{w_{i, k_i -1}}$, where $w_{i, k_i -1} = u_i$ if $k_i = 1$.

Furthermore let's have the cycle $Z_{new} = \pi^*(Z)$ on $\tX_b$ and the Chern class $\pi^*(l') \in -S'_{new}$.

Notice that every differential form in $\frac{H^0(\tX, K + Z)}{H^0(\tX, K)}$ have got a pole on the divisors $E_{w_{i, k_i - 1}}$ of order at most $1$ and there exists a differential form
which has got a pole of order $1$, so it means, that $e_{Z_{new}, w_{i, k_i -1}} > 0$ for each $1 \leq i \leq r$.

Let's denote the subsingularity of $\tX_b$ corresponding to the vertex set $\calv_{new} \setminus \cup_{1 \leq i \leq r} w_{i, k_i -1}$ by $\tX_u$, the restriction of the cycle $Z_{new}$ by $Z_u$ and the restriction of the Chern class $\pi^*(l')$ still by $l'$.
We know, that $\tX_{new}$ is a relatively generic singularity corresponding to $\tX_u$ and $\tX_{u}$ is a relatively generic singularity corresponding to $\tX_1$.
Now we have $h^1(Z_u) < h^1(Z_{new}) = h^1(Z)$.

We claim first that if we have a generic line bundle $\mfl_{new} \in c^{\pi^*(l')}(\eca^{\pi^*(l'), \calL}(Z_{new}))$, then $h^1(Z, \mfl) = h^1(Z_{new}, \mfl_{new})$ and
$b_{Z, \calL,  l'} = b_{Z_{new}, \calL,  \pi^*(l')}$.

The first statement is trivial because if $\mfl \in c^{l'}(\eca^{l', \calL}(Z))$ is a generic line bundle, then $\pi^*(\mfl) \in c^{\pi^*(l')}(\eca^{\pi^*(l'), \calL}(Z_{new}))$ is a generic
line bundle, and we have $h^1(Z, \mfl) = h^1(Z_{new}, \pi^*(\mfl))$.

We show in the following, that $b_{Z, \calL,  l'} = b_{Z_{new}, \calL,  \pi^*(l')}$, we prove this statement in the following lemma:

\begin{lemma}
Let's blow up vertices in $I$ arbitrary times, let the new singularity be $\tX_{new}$, furthermore  let's have the cycle $Z_{new} = \pi^*(Z)$ on $\tX_{new}$ and the Chern class $\pi^*(l') \in -S'_{new}$, then we have $b_{Z, \calL,  l'} = b_{Z_{new}, \calL,  \pi^*(l')}$.
\end{lemma}
\begin{proof}

By induction we are enough to prove the lemma in the case we blow up one vertex, let's denote it by $u$ and let's denote the new vertex by $u_{new}$.

Assume first that $b_{Z, \calL,  l'} = \sum_{1 \leq i \leq n} T(Z'_i, \calL,  l')$, where $Z' \leq Z$ and $Z'_1, \cdots, Z'_n$ are the connected components of $Z'$.

Let's look at the cycles $\pi^*(Z')$ and $\pi^*(Z'_1), \cdots, \pi^*(Z'_n)$ on $\tX_{new}$, we know, that $\pi^*(Z'_1), \cdots, \pi^*(Z'_n)$ are the connected components of
$\pi^*(Z')$ and $\pi^*(Z') \leq \pi^*(Z)$.

On the other hand for every $1 \leq i \leq n$ we have $T(Z'_i, \calL,  l') = T(\pi^*(Z'_i), \calL,  \pi^*(l'))$, so we get that:

\begin{equation}
b_{Z, \calL,  l'} = \sum_{1 \leq i \leq n} T(\pi^*(Z'_i), \calL,  \pi^*(l')) \leq b_{Z_{new}, \calL,  \pi^*(l')}.
\end{equation} 

It means, that we should prove that $b_{Z, \calL,  l'} \geq b_{Z_{new}, \calL,  \pi^*(l')}$.

Assume, that $b_{Z_{new}, \calL,  \pi^*(l')} = \sum_{1 \leq i \leq n} T(Z'_i, \calL,  \pi^*(l'))$, where $Z' \leq Z_{new}$ and $Z'_1, \cdots, Z'_n$ are the connected components of $Z'$.

If there is a component $Z'_j$ whose support is $u_{new}$, then $T(Z'_j, \calL,  \pi^*(l')) = 0$, so we can leave that component.

If there isn't a component $Z'_j$ for which $u \in |Z'_j|$, then we can look at the cycles $Z'_i$ on the original resolution $\tX$ and we get that $T(Z'_i, \calL,  \pi^*(l')) = T(Z'_i, \calL,  l')$,
which means that $b_{Z_{new}, \calL,  \pi^*(l')} = \sum_{1 \leq i \leq n} T(Z'_i, \calL,  l') \leq b_{Z, \calL,  l'}$.

So assume in the following, that $u \in Z'_1$ and $u, u_{new} \notin Z'_i$ if $i \geq 2$.

Now again if $i \geq 2$ and we look at the cycles $Z'_i$ on the original resolution $\tX$, then we have  $T(Z'_i, \calL,  \pi^*(l')) = T(Z'_i, \calL,  l')$.

On the other hand let's have $Z'_1 = A + r \cdot E_u + h \cdot E_{u_{new}}$ and let's look at the cycle $Z_1 = A + r \cdot E_u$ on $\tX$,
then it's obvious to see, that $T(Z'_1, \calL,  \pi^*(l')) \leq T(Z_1, \calL,  l')$, and it yields:

\begin{equation}
b_{Z_{new}, \calL,  \pi^*(l')} \leq \sum_{2 \leq i \leq n} T(Z'_i, \calL,  l') + T(Z_1, \calL,  l') \leq b_{Z, \calL,  l'}.
\end{equation}

It means, that we proved that indeed $b_{Z, \calL,  l'} = b_{Z_{new}, \calL,  \pi^*(l')}$.

\end{proof}

It means, that we should prove $ h^1(Z_{new}, \mfl_{new}) =  b_{Z_{new}, \calL,  \pi^*(l')}$.

Let's denote the restriction of the line bundle $\mfl_{new}$ to $Z_u$ by $\mfl_{u}$, where we know, that $\mfl_{u} \in c^{l'}(\eca^{l', \calL}(Z_{u}))$ is a generic line bundle given by the divisor $D$.

Similarly we can consider $\mfl_{new}$ as a generic line bundle in $ c^{ \pi^*(l')}(\eca^{ \pi^*(l'), \mfl_{u}}(Z_{new}))$ also given by the divisor $D$.

Notice that we have proved part 3) already in the special case when the differential forms have got pole of order at most $1$ on the exceptional divisor $E_v, v \in \calv_2$,
so we can use this to the situation of the cycle $Z_{new}$ and the Chern class $\pi^*(l')$, where we have the line bundle $\mfl_{u}$ on the cycle $Z_u$, because the differential forms in $\frac{H^0(\tX, K + Z)}{H^0(\tX, K)}$ have got a pole on the divisor $E_{w_{i, k_i - 1}}$ of order at most $1$.

It means, that $h^1(Z_{new}, \mfl_{new}) =  b_{Z_{new}, \mfl_{u},  \pi^*(l')}$, so there exists a cycle $Z''$ with components $Z''_1, \cdots, Z''_m$, such that:

\begin{equation}
h^1(Z_{new}, \mfl_{new}) = \sum_{1 \leq i \leq m} T(Z''_i, \mfl_{u}, \pi^*(l')).
\end{equation}

Assume, that $m$ is minimal with that porperty.

Assume first, that there is a vertex $u \in  \calv_{new} \setminus \calv_1$, such that $u \notin |Z''|$.

Let's denote the restriction of the cycle $Z_{new}$ to the vertex set $\calv_{new} \setminus u$ by $Z_s$, then we have $h^1(Z_s) < h^1(Z_{new}) = h^1(Z)$.

Notice, that obviously $h^1(Z_s, D) \geq h^1(Z'', D) \geq  \sum_{1 \leq i  \leq m} T(Z''_i, \mfl_{u}, \pi^*(l'))$, so it follows, that $h^1(Z_s, D) = h^1(Z_{new}, D)$.

Since the restriction of the cycle $Z_s$ to $\calv_1$ is $Z_1$ we know, that the divisor $D$ on $Z_s$ gives a generic line bundle in $c^{\pi^*(l')}(\eca^{\pi^*(l'), \calL}(Z_{s}))$.

We have $h^1(Z_s) <  h^1(Z)$, so by the induction hypothesis we get, that there is a cycle $Z^*$ with components $Z^*_1, \cdots, Z^*_q$, such that:

\begin{equation}
h^1(Z_{s}, D) = \sum_{1 \leq j \leq q} T(Z^*_j, \calL , \pi^*(l')).
\end{equation}

It means, that $h^1(Z_{new}, D) = \sum_{1 \leq j \leq q} T(Z^*_j, \calL , \pi^*(l')) \leq b_{Z_{new}, \calL,  \pi^*(l')}$ which proves the statement in this case.

So we got that $u \in |Z''|$  for every $u \in \calv_{new} \setminus \calv_1$.

On the other hand we claim, that if  $1 \leq j \leq m$, then there is a vertex $u \in \calv_{new} \setminus \calv_1$, such that $u \in |Z''_j|$.

Assume to the contrary, that $ |Z''_j| \subset \calv_1$ and let's have another component $Z''_k$, which is a neighbour of $Z''_j$ in the sense that the vertices on the path between them
does not belong to $|Z''|$.

Let $Z''_{k, j}$ be the smallest connected cycle which is bigger, then $Z''_k + Z''_j$, then we get easily that $T(Z''_j, \calL , \pi^*(l')) + T(Z''_k, \calL , \pi^*(l')) \leq T(Z''_{k, j}, \calL , \pi^*(l'))$, which means, that $h^1(Z'', D) =  \sum_{1 \leq i \leq m, i \neq k, j} T(Z''_i, \mfl_{u}, \pi^*(l')) + T(Z''_{k, j}, \calL , \pi^*(l'))$, which contradicts the minimality of $m$.

This means, that for all $1 \leq j \leq m$ the support $|Z''_j|$ consists of a subset of $\calv_1$ and a few connected components of $ \calv_{new} \setminus \calv_1$ and $|Z''_j| \cap \calv_2 \neq \emptyset$.

For $1 \leq j \leq m$  let's denote by $B_j \subset (1, \cdots, r)$ the subset of connected components of $ \calv_{new} \setminus \calv_1$ which is contained in $|Z''_j|$, we got that $B_1, \cdots, B_m$
gives a partition of $ (1, \cdots, r)$.

This means also that for $1 \leq j \leq m$  and $i \in B_j$ one has $w_{i, k_i - 1} \in |Z''_j|$.

We can also assume, that for every $1 \leq j \leq m$ we have $D(Z''_j, \mfl_{u}, \pi^*(l')) = 1$.
Indeed assume, that $D(Z''_j,  \mfl_{u}, \pi^*(l')) = 0$, then we have $T(Z''_j, \mfl_{u}, \pi^*(l')) = T((Z''_j)_u, \mfl_{u}, \pi^*(l'))$ and then we are back to the case when there is a vertex
$u \in  \calv_{new} \setminus \calv_1$ such that $u \notin |Z''|$.
So in the following we assume that $D(Z''_j, \mfl_{u}, \pi^*(l')) = 1$ for every $1 \leq j \leq m$.

For $1 \leq j \leq m$ let's denote by $\mfl_j$ the generic line bundle in $r_j^{-1}(\calO_{(Z''_j)_u}(D)  ) \subset \pic^{R_j(\pi^*(l'))}(Z''_j)$, then by definition we have $T(Z''_j, \mfl_{u}, \pi^*(l')) = h^1(Z''_j, \mfl_j) + 1$  and we know, that $h^1(Z''_j, D) = T(Z''_j, \mfl_{u}, \pi^*(l')) = h^1(Z''_j, \mfl_j) + 1$ for every $1 \leq j \leq m$.

From $D(Z''_j, \mfl_{u}, \pi^*(l')) = 1$ we know, that $H^0(Z''_j, \mfl_j)_0 = \emptyset$, which means, that there is a smallest cycle $l_j$, such that $H^0(Z''_j - l_j, \mfl_j - l_j)_0 \neq \emptyset$.

Assume in the following first, that there exists an index $1 \leq j \leq m$ and $i \in B_j$, such that $E_{w_{i, k_i - 1}} \nleq l_j$.

Notice that if $w'$ is a neighbour of $w_{i, k_i - 1}$, then we also have $E_{w'} \nleq l_j$ because $( \pi^*(l'), E_{w_{i, k_i - 1}}) = 0$ and $(Z''_j - l_j, \pi^*(l') - l_j) \geq 0$.

Notice, that $\calO_{Z''_j - l_j}(\mfl_j - l_j)$ is a relatively generic line bundle in $r_j^{-1}(\calO_{(Z''_j - l_j)_u}(D-l_j)  ) \in \pic^{\pi^*(l') - l_j}(Z''_j - l_j)$ and we have $H^0(Z''_j - l_j, \mfl_j - l_j)_0 \neq \emptyset$, which means by our main theorem about relatively generic line bundles from \cite{R}, that $h^1((Z''_j - l_j)_u, \mfl_j - l_j) = h^1(Z''_j- l_j, \mfl_j - l_j)$

This also means, that $h^1(Z''_j- l_j, \mfl_j - l_j) = h^1(Z''_j- l_j - E_{w_{i, k_i - 1}}, \mfl_j - l_j)$.

We have $h^0(Z''_j - l_j, \mfl_j - l_j) = h^0(Z''_j , \mfl_j)$, which means, that $ h^1(Z''_j, \mfl_j) = h^1(Z''_j- l_j, \mfl_j - l_j) + \chi(\pi^*(l')) - \chi(\pi^*(l') + l_j)$.

On the other hand we have $ h^1(Z''_j - E_{w_{i, k_i - 1}}, \mfl_j) \geq h^1(Z''_j- l_j - E_{w_{i, k_i - 1}}, \mfl_j - l_j) + \chi(\pi^*(l')) - \chi(\pi^*(l') + l_j)$, which means, that 
$ h^1(Z''_j - E_{w_{i, k_i - 1}}, \mfl_j) \geq h^1(Z''_j , \mfl_j)$, which means of course, that $ h^1(Z''_j- E_{w_{i, k_i - 1}}, \mfl_j) = h^1(Z''_i , \mfl_j)$.

Notice, that $\mfl_j | Z''_j- E_{w_{i, k_i - 1}}$ is a generic line bundle in $r_j^{-1}(\calO_{(Z''_j - E_{w_{i, k_i - 1}})_u}(D)  ) \in \pic^{\pi^*(l')}(Z''_j - E_{w_{i, k_i - 1}})$.

We know, that $H^0(Z''_j - E_{w_{i, k_i - 1}}, D)_0 \neq \emptyset$ so we have $h^1(Z''_j - E_{w_{i, k_i - 1}}, D) > h^1(Z''_j - E_{w_{i, k_i - 1}} - l_j, D- l_j) + \chi(\pi^*(l')) - \chi(\pi^*(l') + l_j)$.

On the other hand we have $h^1(Z''_j - E_{w_{i, k_i - 1}} - l_j, D- l_j) \geq  h^1(Z''_i - l_j - E_{w_{i, k_i - 1}}, \mfl_j - l_j)$, so we get that $h^1(Z''_j - E_{w_{i, k_i - 1}}, D) \geq h^1(Z''_j , \mfl_j) + 1 = h^1(Z''_j, D)$.

Now we have $h^1(Z, D) = \sum_{1 \leq k \leq m} h^1(Z''_k, D) = \sum_{1 \leq k \leq m, k \neq j} h^1(Z''_k , D) + h^1(Z''_j - E_{w_{i, k_i - 1}}, D) $, it means, that we get $ h^1(Z, D) = h^1(Z_{new} - E_{w_{i, k_i - 1}} , D)$.

We know, that $(Z_{new} - E_{w_{i, k_i - 1}})_1 = Z_1$ so $D$ is a generic divisor of the line bundle $\calO_{(Z - E_{w_{i, k_i - 1}})_1}(D) = \calO_{Z_1}(D)$
and $h^1(Z_{new} - E_{w_{i, k_i - 1}}) < h^1(Z)$ so by the induction hypothesis we know, that there is a cycle $Z^* \leq Z_{new} - E_{w_{i, k_i - 1}}$ with components $Z^*_1, \cdots Z^*_q$, such that:

\begin{equation}
h^1(Z_{new} - E_{w_{i, k_i - 1}} , D) = \sum_{1 \leq k \leq q} T(Z^*_k, \calL, \pi^*(l')).
\end{equation}

However we get that $ h^1(Z_{new}, D) = \sum_{1 \leq k \leq q} T(Z^*_k, \calL, \pi^*(l')) \leq b_{Z_{new}, \calL,  \pi^*(l')}$ which proves the statement in this case.
It means, that we can assume, that for every $1 \leq j \leq m$ and $i \in B_j$ we have $l_j \geq E_{w_{i, k_i - 1}}$.

Notice, that $( \pi^*(l'), E_u) = 0$ for all vertices $u \in (\calv_{new} \setminus \calv_{1})$ and $|l_j|$ contains a vertex in every component of $| Z''_j| \cap (\calv_{new} \setminus \calv_{1})$, which means that
 $|l_j|$ contains evey component of $| Z''_j| \cap (\calv_{new} \setminus \calv_{1})$.

This means, that $(\calv_{new} \setminus \calv_{1})_i \in |l_j|$ for every index $i \in B_j$, let's denote $\sum_{u \in (\calv_{new} \setminus \calv_{1})_i , i \in B_j} E_u = E_j$, then we get, that $h^1(Z''_j,  \mfl_j) =  h^1(Z''_j - E_j,  \mfl_j- E_j) + \chi(\pi^*(l')) - \chi(\pi^*(l') + E_j)$.

The same proof as in the main theorem about cohomology of natural line bundles on relatively generic singularities from \cite{R} gives, that $h^1(Z''_j - E_j, D- E_j) = h^1(Z''_j - E_j, \calL_{gen})$, where $\calL_{gen} \in r^{-1}(\calO_{(Z''_j)_1}(D - E_j)) \subset \pic^{\pi^*(l')- E_j}(Z''_j - E_j)$ is a generic line bundle.

Now if $\calL'_{gen} \in  r^{-1}(\calO_{(Z''_j)_1}(D)) \subset \pic^{\pi^*(l')}(Z''_j)$ is a generic line bundle then we have $h^1(Z''_j, \calL'_{gen}) \geq h^1(Z''_j - E_j, \calL'_{gen}- E_j) + \chi(\pi^*(l')) - \chi(\pi^*(l') + E_j)$.

This means, that $h^1(Z''_j, \calL'_{gen})  \geq h^1(Z''_j,  \mfl_j)$, however we have obviously $h^1(Z''_j,  \mfl_j) \geq h^1(Z''_j, \calL'_{gen})$ by semicontinuity, so we get
$h^1(Z''_j,  \mfl_j)  = h^1(Z''_j, \calL'_{gen})$ and this means, that $ T(Z''_j, \mfl_{u}, \pi^*(l')) = h^1(Z''_j, \calL'_{gen}) + 1$.

On the other hand notice, that $h^1(Z''_j, \calL'_{gen}) = h^1(Z''_j - E_j, \calL'_{gen}- E_j) + \chi(\pi^*(l')) - \chi(\pi^*(l') + E_j)$, so we have $H^0(Z''_j, \calL'_{gen})_0 = \emptyset$,
which means, that $D(Z''_j, \calL, \pi^*(l')) = 1$.
We finally get, that $T(Z''_j, \mfl_{u}, \pi^*(l')) = T(Z''_j, \calL, \pi^*(l')) $.

This means, that we have $h^1(Z_{new}, \mfl_{new}) = \sum_{1 \leq j \leq m} T(Z''_j, \calL, \pi^*(l')) \leq b_{Z_{new}, \calL,  \pi^*(l')}$, which proves part 3)
in the special case when $(l', E_u) = 0$ for every $u \in \calv_2$.

In the following we prove part 3) in full generality.

So let's denote the $*$-restriction of the Chern class $l'$ to the vertex set $\calv_2$ by $l'_2$ and to the vertex set $\calv_1$ by $l'_1$ and let's have a generic divisor $D \in (c^{l'_1})^{-1}(\calL) \in \eca^{l'_1}(Z_1)$ and a generic divisor $D' \in \eca^{l'_2}(Z)$, then we know, that $D' + D$ gives a generic line bundle $\mfl \in c^{l'}(\eca^{ l', \calL}(Z))$, this means, that 
we have to compute $h^1(Z, \mfl) = h^1(Z, D + D')$.

Let's recall the main theorem about cohomology of twisted line bundles from \cite{NNAD}:

\begin{corollary}\label{cor:formL}
Let's have an arbitrary singularity with resolution $\tX$, a Chern class $l'\in -\calS'$, an effective cycle $Z\geq 0$ and a line bundle $\calL_0$ with $H^0(Z,\calL_0)_{reg}\not=\emptyset$.
Furthermore let's have a generic line bundle $\calL^{im}_{gen}\in \im (c^{l'}(Z))$, then we have:
$$h^1(Z,\calL_0\otimes\calL^{im}_{gen})=
\max_{0\leq Z_1\leq Z} \{\, h^1(Z_1,\calL_0)- (l', Z_1)\}.
$$
\end{corollary}

It means, that we have the following equation in our case:

\begin{equation}
h^1(Z, D + D') = \max_{0 \leq Z' \leq Z}( h^1(Z', D) - (Z', l'_2)) =  \max_{0 \leq Z' \leq Z} \left( \sum_{1 \leq i \leq n}h^1(Z'_i, D) - (Z'_i, l'_2) \right),
\end{equation}
where the connected components of the cycle $Z'$ are $Z'_1, \cdots, Z'_n$.

So assume in the following, that $h^1(Z, D + D') = \sum_{1 \leq i \leq n}h^1(Z'_i, D) - (Z'_i, l'_2)$ for some cycle $Z'$ and assume first that $n$ is minimal with that property and second that
if the number of components is $n$, then $Z'$ is maximal with that property, in the sense, there isn't a cycle $Z \geq Z^* > Z'$ with the same property.

Now let the components of $Z_1$ be $Z_{1, 1}, \cdots, Z_{1, k}$, we claim, that if $1 \leq i \leq n$ and $1 \leq j \leq k$ and $|Z'_i| \cap |Z_{1, j}| \neq \emptyset$
then $Z'_i \geq Z_{1, j}$.

Assume to the contrary that there exists $i, j$ such that $|Z'_i| \cap |Z_{1, j}| \neq \emptyset$ and $Z'_i \ngeq Z_{1, j}$.

Now let's have $Z'' = \max(Z', Z_{1, j})$ and notice that $ h^1(Z'', D) - (Z'', l'_2) \geq  h^1(Z', D) - (Z', l'_2)$, which means, that $h^1(Z, D + D') = h^1(Z'', D) - (Z'', l'_2)$
however this contradicts to the minimality of $n$ or the maximality of $Z'$ so we got a contradiction.

Now for every index $1 \leq i \leq n$, $(Z'_i)_1$ is a sum of a few components of $Z_1$ which means, that the divisor $D$ is a generic divisor of the line bundle $\calO_{(Z'_i)_1}(D)$.

We use the part 3) in the case already proved to get for each $1 \leq i \leq n$ cycles $Z''_{i} \leq Z'_i$ with connected components $Z''_{i, 1}, \cdots Z''_{i, c_i}$, such that for every $1 \leq i \leq n$ we have:

\begin{equation}
h^1(Z'_i, D) = \sum_{1 \leq j \leq c_i} T(Z''_{i, j}, \calL, l'_1).
\end{equation}

Notice that we can assume, that for all $1 \leq i \leq n, 1 \leq j \leq c_i$ we have $ D(Z''_{i, j}, \calL, l'_1) = 1$ or $Z''_{i, j} = (Z''_{i, j})_1$.

Indeed if $D(Z''_{i, j}, \calL, l'_1) = 0$, then we have $T(Z''_{i, j}, \calL, l'_1) = h^1((Z''_{i, j})_1, \calL)$ so we can replace $Z''_{i, j}$ by $(Z''_{i, j})_1$.

Notice that we have $h^1(Z, D + D') = h^1(Z', D + D') = \sum_{1 \leq i \leq n}h^1(Z'_i, D) - (Z'_i, l'_2)$, which means, that $h^1(Z, D + D') = \sum_{1 \leq i \leq n, 1 \leq j \leq c_i} T(Z''_{i, j}, \calL, l'_1) - (Z'_{i, j}, l'_2) \leq \sum_{1 \leq i \leq n, 1 \leq j \leq c_i} h^1(Z''_{i, j}, D + D')$.

It means, that we have $h^1(Z, D + D') = \sum_{1 \leq i \leq n, 1 \leq j \leq c_i} h^1(Z''_{i, j}, D + D')$ and $h^1(Z''_{i, j}, D + D') = T(Z''_{i, j}, \calL, l'_1) - (Z'_{i, j}, l'_2)$ for every $ 1 \leq i \leq n, 1 \leq j \leq c_i$.

Now if $D(Z''_{i, j}, \calL, l') = 0$, then let's denote $Z'_{i, j} = (Z''_{i, j})_1$ and if $D(Z''_{i, j}, \calL, l') = 1$, then let's have $Z'_{i, j} = Z''_{i, j}$, we claim, that:

\begin{equation}
h^1(Z, D + D') \leq \sum_{1 \leq i \leq n, 1 \leq j \leq c_i}T(Z'_{i, j}, \calL, l') \leq b_{Z, \calL, l'}.
\end{equation}

For this we are enough to prove, that $T(Z'_{i, j}, \calL, l') \geq h^1(Z''_{i, j}, D + D')$.

If $D(Z''_{i, j}, \calL, l') = 0$, then $T(Z'_{i, j}, \calL, l') = h^1((Z''_{i, j})_1, D + D')$ and $h^1(Z''_{i, j}, D + D') = h^1((Z''_{i, j})_1, D + D')$ by the relative dominancy,
so we indeed get $T(Z'_{i, j}, \calL, l') = h^1(Z''_{i, j}, D + D') $.

So assume, that $D(Z''_{i, j}, \calL, l') = 1$, in this case we have $Z'_{i, j} = Z''_{i, j}$
and $T(Z'_{i, j}, \calL, l') = g(Z''_{i, j}, \calL, l') + 1$ and $h^1(Z''_{i, j}, D + D') = g(Z''_{i, j}, \calL, l'_1) - (Z''_{i, j}, l'_2) + 1$.

It means, that we have to prove $g(Z''_{i, j}, \calL, l') \geq g(Z''_{i, j}, \calL, l'_1) - (Z''_{i, j}, l'_2)$.

Indeed let's have a generic line bundle $\calL_{gen} \in r^{-1}(\calL) \subset \pic^{l'}(Z''_{i, j})$, then $\calL_{gen} - D'$ is a generic line bundle in $ r^{-1}(\calL) \subset \pic^{l'}(Z''_{i, j})$, 
so we have $g(Z''_{i, j}, \calL, l') = h^1(Z''_{i, j}, \calL_{gen})$ and $g(Z''_{i, j}, \calL, l'_1)  =  h^1(Z''_{i, j}, \calL_{gen}- D')$ and we have the exact sequence:

\begin{equation}
0 \to H^0(Z''_{i, j}, \calL_{gen}- D') \to H^0(Z''_{i, j}, \calL_{gen}) \to H^0(D', \calL_{gen})  \to H^1(Z''_{i, j}, \calL_{gen}- D') \to H^1(Z''_{i, j}, \calL_{gen}) \to 0.
\end{equation}

This yields the inequailtiy $h^1(Z''_{i, j}, \calL_{gen}) \geq h^1(Z''_{i, j}, \calL_{gen}- D') -  (Z'_{i, j}, l'_2)$ which proves our theorem completely.
\end{proof}

\section{Dimensions of images of Abel maps for relatively generic singularities}

In \cite{NNAD} the authors investigated images of Abel maps for surface singularities, and gave two different algorithms, which can compute these invariants using cohomology of several 
cycles or the resolution of periodic constants of singularities given by blowing up the original singularity in generic points.

One of the main theorems which gives the dimension of images of Abel maps was Theorem \ref{dim}.

Now assume, that we have $2$ resolution graphs $\mathcal{T}_1 \subset \mathcal{T}$ and let $\tX_1$ be a fixed singularity with resolution graph $\mathcal{T}_1$.

Furthermore for every vertex $v \in \calv \setminus \calv_1$ we fix cuts $D_v$ on $\tX_1$ and  let $\tX_1 \subset \tX$ be a singularity with resolution graph $\mathcal{T}$, 
such that we glue the exceptional divisors $E_v$ along $D_v$.

 If $\tX$ is a relatively generic singularity corresponding to $\tX_1$ and the cuts $D_v$, then we can compute the cohomology of the cycles $h^1(Z_1)$, where $0\leq Z_1 \leq Z$.

Indeed, let the connected components of the cycle be $Z_{1, 1}, \cdots, Z_{1, j}$, then we have $h^1(Z_1) = \sum_{1 \leq i \leq j} h^1(Z_{1, i})$ and
we have $ h^1(Z_{1, i}) =  h^1(Z_{1, i} - E_{|Z_{1, i}|}, - E_{|Z_{1, i}|})$.

On the other hand we can use  Theorem \ref{rell} above about cohomology of natural line bundles on relatively generic surface singularties from \cite{R} to conclude, that:

\begin{equation}
h^1(Z_{1, i}) = \chi(-l'_m) - \min_{0 \leq l \leq Z_{1, i} - E_{|Z_{1, i}|} }(\chi(-l'_m+l)-  h^1((Z_{1, i} - E_{|Z_{1, i}|}  -l)_1,-R(E_{|Z_{1, i}|} + l)) ),                                   
\end{equation}
where $l'_m = c^1( R(- E_{|Z_{1, i}|} ))$.

It means, that in the case of relatively generic singularities we get combinatorial formulas for the dimensions of images of Abel maps using only analytic invariants of the base singularity $\tX_1$.

However in the following we would like to get another formula following more the philosophy of the case of generic analytic structures from \cite{NNAD}.

The coincidence of the two formulae is mysterious and cannot be seen directly by the author at this point, and it may somehow be coded in the combinatorics of the possible cohomology numbers of
the base singularity $\tX_1$ for its different analytic types.

\begin{theorem}
Let's have the setup as above, so $\mathcal{T}_1 \subset \mathcal{T}$ be $2$ resolution graphs and let $\tX_1$ be a fixed singularity with resolution graph $\mathcal{T}_1$.
Furthermore for every vertex $v \in \calv \setminus \calv_1$ we fix cuts $D_v$ on $\tX_1$ and  let $\tX_1 \subset \tX$ be a singularity with resolution graph $\mathcal{T}$, 
such that we glue the exceptional divisors $E_v$ along $D_v$.

For an arbitrary element $l' \in - S'(\mathcal{T})$  and cycle $B$ on $\tX$ let's denote $g(B, l')= h^1(B, \calL)$, where $\calL$ is a generic line bundle in $r^{-1}(\calL_1)$ and
$\calL_1$ is a generic line bundle in $\im(c^{R(l')}(B_1))$.

Notice, that by the Theroem \ref{rel} on relatively generic line bundles form \cite{R}, this is an analytic invariant depending just on $\tX_1$ and the cuts $D_v$.

Now, let $l' \in -S'$ be arbitrary element, $Z$ an arbitrary cycle on $\tX$ and let's denote:
\begin{equation}
b_{Z, l'} = \max_{Z' \leq Z} \left( \sum_{1 \leq i \leq n} T(Z'_i, l')   \right).
\end{equation}
Here $|Z'_1|, \cdots, |Z'_n|$ are the connected components of $|Z'|$ with $\sum_{1 \leq i \leq n}Z'_i = Z'$ and $T(Z'_i, l') = g(Z'_i, l') + D(Z'_i, l')$, where 
$D(Z'_i, l') = 1$ if $(l', \calL_1)$ is not relative dominant on $Z'_i$, where $\calL_1$ is a generic line bundle in $\im(c^{R(l')}((Z'_i)_1))$, and $D(Z'_i, l') = 0$ otherwise.

1)  We have the inequality $\dim(\im(c^{l'}(Z)) \leq h^1(Z) - b_{Z, l'}$.

2) If $\tX$ is a relatively generic singularity corresponding to $\tX_1$ and the cuts $D_v$, then we have the equality $\dim(\im(c^{l'}(Z)) = h^1(Z) - b_{Z, l'}$
\end{theorem}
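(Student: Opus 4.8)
The plan is to deduce this statement directly from the main theorem of Section~4 by a genericity argument, so that all the hard cohomological work (the blow-up reductions and the recursive construction of the auxiliary Chern class) is already contained there and nothing new of that kind is needed here. The key observation is that a generic element $\mfl$ of $\im(c^{l'}(Z))$ is automatically of the form treated in Section~4: its restriction $r(\mfl)=\mfl\,|\,Z_1$ is generic in $\im(c^{R(l')}(Z_1))$, and $\mfl$ itself is generic inside the corresponding relative fibre, so the Section~4 formula for $h^1(Z,\mfl)$ applies to it.

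Concretely, I would first recall from \cite{NNA1} that for a generic $\mfl\in\im(c^{l'}(Z))$ one has $\dim(\im(c^{l'}(Z)))=h^1(Z)-h^1(Z,\mfl)$, so the whole theorem is equivalent to the two statements $h^1(Z,\mfl)\geq b_{Z,l'}$ in general and $h^1(Z,\mfl)=b_{Z,l'}$ in the relatively generic case. Next I would set up the genericity transfer. The square relating $c^{l'}$, $c^{R(l')}$, $\fr$ and $r$ commutes, and $\fr$ is dominant by Proposition~\ref{lem:locsubmer}; hence $r$ maps the irreducible variety $\im(c^{l'}(Z))$ dominantly onto $\im(c^{R(l')}(Z_1))$, its generic fibre being $r^{-1}(\calL_1)\cap\im(c^{l'}(Z))=c^{l'}(\eca^{l',\calL}(Z))$ for $\calL$ a line bundle on $\tX_1$ with $\calL\,|\,Z_1=\calL_1$ and $c_1(\calL)=R(l')$ (such $\calL$ exists, since elements of $\im(c^{R(l')}(Z_1))$ are restrictions of $\calO_{\tX_1}(D_1)$). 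For $\calL_1$ generic the fibre is nonempty, so the Section~4 theorem applies, and a generic point of a generic fibre is a generic point of $\im(c^{l'}(Z))$. Thus the value $h^1(Z,\mfl)$ occurring in the \cite{NNA1} formula is exactly the one estimated in parts 2) and 3) of the Section~4 theorem, giving $h^1(Z,\mfl)\geq b_{Z,\calL,l'}$ unconditionally and $h^1(Z,\mfl)=b_{Z,\calL,l'}$ when $\tX$ is relatively generic corresponding to $\tX_1$.

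It then remains to identify $b_{Z,\calL,l'}$ with $b_{Z,l'}$. Both are the same $\max$ over subcycles $Z'\leq Z$ of the same sums of $T$-terms, and the quantities $g$ and $D$ depend on the chosen bundle only through the restrictions $\calL\,|\,(Z'_i)_1$ with $(Z'_i)_1\leq Z_1$. Since restriction of line bundles is compatible with the Abel maps and carries a generic element of $\im(c^{R(l')}(Z_1))$ to a generic element of $\im(c^{R(l')}((Z'_i)_1))$, and since there are only finitely many subcycles $Z'\leq Z$, I would choose $\calL_1$ in the finite intersection of the corresponding dense open genericity conditions; for such a choice $\calL\,|\,(Z'_i)_1$ is generic in $\im(c^{R(l')}((Z'_i)_1))$ for every component of every subcycle, whence $g(Z'_i,\calL,l')=g(Z'_i,l')$ and $D(Z'_i,\calL,l')=D(Z'_i,l')$, and therefore $b_{Z,\calL,l'}=b_{Z,l'}$. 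Combining this with the previous paragraph yields part 1) (the inequality, valid for any such $\tX$) and part 2) (the equality, under relative genericity).

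The main obstacle I expect is precisely this last identification of the two $b$-invariants, and within it the behaviour of the relative-dominance term $D$ under restriction. One must check that restriction maps $\im(c^{R(l')}(Z_1))$ dominantly onto $\im(c^{R(l')}((Z'_i)_1))$, so that ``generic maps to generic''; this follows from compatibility of Abel maps with restriction of effective divisors together with irreducibility of the spaces $\eca$, but the relative-dominance condition defining $D$ is a priori not visibly open, so some care is needed to guarantee that the value of $D$ realized by the single fixed generic $\calL$ is the generic value on each $\im(c^{R(l')}((Z'_i)_1))$ entering $b_{Z,l'}$. The nonemptiness of $\eca^{l',\calL}(Z)$ for generic $\calL_1$, needed in order to invoke the Section~4 theorem, should also be recorded explicitly as a consequence of the dominance of $\fr$.
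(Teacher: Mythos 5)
Your strategy is not the one the paper follows: the paper proves part 2) by running the blow-up algorithm of \cite{NNAD} (the sequence ${\bf s}_1,\dots,{\bf s}_N$ along which $d_{{\bf s}_i}=d_{{\bf s}_{i+1}}+1$ and finally $d_{{\bf s}_N}=e_{{\bf s}_N}$), and then performing a downward induction on $i$: the base case $i=N$ is settled by Theorem \ref{rell} on natural line bundles, and the inductive step is the estimate $b_{Z_{{\bf s}_{i+1}},l'_{{\bf s}_{i+1}}}\leq b_{Z_{{\bf s}_i},l'_{{\bf s}_i}}+1$ combined with part 1). What you propose is instead exactly the alternative route that the author describes in the Remark immediately following the theorem and explicitly declines to follow. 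Within that route, your treatment of part 1) is essentially sound: the genericity-transfer steps you flag as the main obstacle (that $r$ maps $\im(c^{l'}(Z))$ dominantly onto $\im(c^{R(l')}(Z_1))$, and that restriction sends generic elements to generic elements for the finitely many constructible conditions defining the $g$- and $D$-terms over all subcycles $Z'\leq Z$) are routine consequences of Proposition \ref{lem:locsubmer} and of the finiteness of the data involved, and the identification $b_{Z,\calL,l'}=b_{Z,l'}$ for such a generic $\calL$ is the correct statement to prove.

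The genuine gap is in part 2), and it is precisely the difficulty the paper's Remark names: the cuts. In the present theorem the cuts $D_v$, $v\in\calv\setminus\calv_1$, are \emph{fixed} and possibly special, and the hypothesis is only that $\tX$ is relatively generic corresponding to $\tX_1$ \emph{and the cuts} $D_v$. The Section 4 theorem, part 3), assumes that $\tX$ is relatively generic corresponding to $\tX_1$, which by the construction recalled in Section 3 means that the gluing — including the choice of cuts — is generic; its proof uses this freedom repeatedly (fixing a generic divisor $D$ on $Z_1$ first and then re-gluing the tubular neighbourhoods generically, and performing further generic blow-ups which again produce unconstrained relatively generic configurations). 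A singularity which is generic only relative to prescribed, non-generic cuts does not satisfy that hypothesis, so the equality $h^1(Z,\mfl)=b_{Z,\calL,l'}$ of Section 4 cannot simply be cited; only the inequality of part 2) there (which needs no genericity of $\tX$) survives, i.e.\ your argument really proves only part 1). To complete your route one must carry out the reduction sketched in the Remark: blow up $\tX$ sequentially at the intersection points of the cuts $D_v$ with the exceptional divisors so as to arrive at a configuration where the unconstrained relative-genericity hypothesis of Section 4 holds, and prove that both $\dim(\im(c^{l'}(Z)))$ and $b_{Z,l'}$ are unchanged under these blow-ups (whose centres are \emph{not} generic points, so the blow-up invariance lemma proved inside Section 4 does not apply verbatim). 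None of this appears in your proposal, and it is not a formality — it is the stated reason the paper chose the other proof.
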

\begin{remark}
One could prove this theroem using our main theorem about dimensions of relative Abel maps for the case when the fixed line bundle on $\tX_1$ is a generic line bundle 
$\calL_1 \subset \im(c^{R(l')}(B_1))$. The only difficulty would be to deal with the cuts $D_v, v \in \calv \setminus \calv_1$, but this could be overcome by blowing up the singularity
sequentially at the intersection points of $D_v$ and the exceptional divisors $E_v$.
However we choose to prove the theorem in a different way which is more similar to the original treatment of dimensions of Abel images in \cite{NNAD}.
\end{remark}

\begin{proof}

For part 1) notice, that for a generic line bundle $\calL \in \im(c^{l'}(Z))$ one has $h^1(Z, \calL) = h^1(Z) - \dim(\im(c^{l'}(Z))$.

Now for all $Z' \leq Z$ one has $h^1(Z, \calL) \geq h^1(Z', \calL) = \sum_{1 \leq i \leq n} h^1(Z'_i, \calL)$, so we only need to prove $h^1(Z'_i, \calL) \geq g(Z'_i, l') + D(Z'_i, l')$.

We know that $\calL | (Z'_i)_1$ is a generic line bundle in $ \im(c^{R(l')}((Z'_i)_1))$, so we immediately get, that $h^1(Z'_i, \calL) \geq g(Z'_i, l')$.

Furthermore if $(l', \calL| (Z'_i)_1)$ is not relatively dominant on $Z'_i$, then the generic line bundle in $r^{-1}(\calL | (Z'_i)_1)$ is not in $\im(c^{l'}(Z'_i))$, while $\calL| Z'_i \in \im(c^{l'}(Z_i))$, which means, that 
$h^1(Z'_i, \calL) \geq g(Z'_i, l') + 1$, so we are done with part 1).

Now for part 2) we want to use the algorithm from  \cite{NNAD} computing the dimensions of images of Abel maps, so let's recall the notations, the setup and the algorithm:

Fix some resolution $\tX$ of $(X,o)$ and
 $-l' = \sum_{v \in \calv} a_v E_v^*\in\calS'\setminus \{0\}$ (hence each $a_v\in\Z_{\geq 0}$) and an effective cycle $Z$ on the resolution.
We will consider a finite sequence  of blowing ups starting from $\tX$.
In order to find a bound for the number of blowing ups recall that for any representative $\omega$ in
$H^0(\tX\setminus E,\Omega^2_{\tX})/H^0(\tX,\Omega^2_{\tX})$ the order of pole of $\omega$
along some $E_v$ is less than or equal to
 the $E_v$--multiplicity $m_v$ of $\max\{0,\lfloor Z_K\rfloor\} $ (see e.g.
\cite[7.1.3]{NNA1}).

Then, for every $v\in \calv$ with $a_v>0$ we fix $a_v$ generic points on $E_v$, say
$p_{v,k_v}$, $1\leq k_v\leq a_v$. Starting from each $p_{v,k_v}$ we consider a sequence of blowing ups  of length  $m_v$:
first we blow up $p_{v,k_v}$ and we create the exceptional curve $F_{v,k_v,1}$,
then we blow up a generic point of
$F_{v,k_v,1}$ and we create $F_{v,k_v,2}$, and we do this all together $m_v$ times.

We proceed in this way with all points $p_{v,k_v}$, hence we get $\sum_va_v$ chains of modifications.
If $a_vm_v=0$ we do no modification along $E_v$.
A set of integers
${\bf s}=\{{\bf s}_{v,k_v}\}_{v\in\calv,\ 1\leq k_v\leq a_v}$ with $0\leq {\bf s}_{v,k}\leq m_v$ provides an
intermediate step of the tower: in the $(v,k_v)$ tower we do exactly ${\bf s}_{v,k_v}$ blowing ups;
${\bf s}_{v,k_v}=0$ means that we do not blow up $p_{v,k_v}$ at all.
(In the sequel, in order to avoid aggregation of indices, we simplify $k_v$ into $k$.)
Let us denote this modification by $\pi_{{\bf s}}:\tX_{{\bf s}}\to \tX$.
In $\tX_{{\bf s}}$ we find the exceptional curves $\cup_{v\in\calv}E_v\cup \cup_{v,k}
\cup_{1\leq t \leq {\bf s}_{v,k}}F_{v,k,t}$; we index the set of vertices as
$\calv_{{\bf s}}:=\calv \cup \cup_{v,k}
\cup_{1\leq t\leq {\bf s}_{v,k}}\{w_{v,k,t}\}$.
At each level ${\bf s}$ we set the next objects: $Z_{{\bf s}}:=\pi_{{\bf s}}^*(Z)$,
$I_{{\bf s}}:=\cup_{v,k}\{w_{v,k,{\bf s}_{v,k}}\}$, $-l'_{{\bf s}}:= \sum_{v,k}F^*_{v,k,{\bf s}_{v,k}}$
(in $L'_{{\bf s}}$, where
 $F_{v,k,0}=E_v$), $d_{{\bf s}}:=\dim \im c^{l'_{{\bf s}}}(Z_{{\bf s}})=d_{Z_{{\bf z}}}(I_{{\bf s}})$
and  $e_{{\bf s}}:=e_{Z_{{\bf z}}}(I_{{\bf s}})$
(both considered in  $\tX_{{\bf s}}$), one has
 again $d_{{\bf s}}\leq e_{{\bf s}}$ for any ${\bf s}$.

From definitions, for ${\bf s}={\bf 0}$ one has
$I_{{\bf 0}}= |l'|$, $e_{{\bf 0}}=e_Z(l')$ and $d_{{\bf 0}}=d_Z(l') := \dim(\im(c^{l'}(Z))$.

There is a natural partial ordering on the set of ${\bf s}$--tuples. Some of the above invariants are
constant with respect to ${\bf s}$, some of them are only monotonous. E.g., by Leray spectral sequence
one has $h^1(\calO_{Z_{{\bf s}}})=h^1(\calO_Z)$ for all ${\bf s}$. One the other hand,
\begin{equation}\label{eq:ineqes}\mbox{
if ${\bf s}_1\leq {\bf s}_2$ then $e_{{\bf s}_1}=h^1(\calO_{Z_{{\bf s}_1}})-\dim
\Omega_{Z_{{\bf s}_1}}(I_{{\bf s}_1})\geq  h^1(\calO_{Z_{{\bf s}_2}})-\dim
\Omega_{Z_{{\bf s}_2}}(I_{{\bf s}_2})=  e_{{\bf s}_2}$}\end{equation}
because
$\Omega_{Z_{{\bf s}_1}}(I_{{\bf s}_1})\subset \Omega_{Z_{{\bf s}_2}}(I_{{\bf s}_2})$.
In fact, for any
$\omega$, the pole--order along $F_{v,k,{\bf s}_{v, k}+1}$ of its pullback is one less than the pole--order of
$\omega$ along  $F_{v,k,{\bf s}_{v, k}}$. Hence, for ${\bf s}={\bf m}$
(that is, when ${\bf s}_{v,k}=m_v$ for all $v$ and $k$, hence all the possible pole--orders along
$I_{{\bf m}}$ automatically vanish) one has
$\Omega_{Z_{{\bf m}}}(I_{{\bf m}})=H^0(\tX_{{\bf m}}, \Omega^2_{{\tX_{{\bf m}}}}(Z_{{\bf m}}))/
H^0(\Omega^2_{{\tX_{{\bf m}}}})$. Hence $e_{{\bf m}}=0$. In particular,  necessarily
$d_{{\bf m}}=0$ too.

More generally, for any ${\bf s}$ and $(v,k)$ let ${\bf s}^{v,k}$ denote that  tuple which
is obtained from
${\bf s}$ by increasing ${\bf s}_{v,k}$ by one. By the above discussion
if   no form has pole along $F_{v,k,{\bf s}}$ then
$\Omega_{Z_{{\bf s}}}(I_{{\bf s}})=\Omega_{Z_{{\bf s}^{v,k}}}(I_{{\bf s}^{v,k}})$, hence
$e_{{\bf s}}=  e_{{\bf s}^{v,k}}$. Furthermore, by Laufer duality (or,
integral presentation of the Abel map as in \cite[\S 7]{NNA1}), under such condition
$d_{{\bf s}}=  d_{{\bf s}^{v,k}}$ as well.

Therefore, we can redefine $e_{{\bf s}}$ and  $d_{{\bf s}}$  for tuples
${\bf s}=\{{\bf s}_{v,k}\}_{v,k}$ even for
arbitrary ${\bf s}_{v,k}\geq 0$: $e_{{\bf s}} = e_{\min\{{\bf s},{\bf m}\}}$ and
$d_{{\bf s}} = d_{\min\{{\bf s},{\bf m}\}}$ (and these values agree with the ones which might be
obtained by the
 first original
construction  applied for larger chains of blow ups).

The next theorem from \cite{NNAD} relates the invariants  $\{d_{{\bf s}}\}_{{\bf s}}$ and
$\{e_{{\bf s}}\}_{{\bf s}}$.

\begin{theorem}
 With the above notations  the following facts hold.

(1) $ d_{{\bf s}} -  d_{{\bf s}^{v, k}} \in \{0, 1\}$.

(2) If for some fixed ${\bf s}$ the numbers $\{d_{{\bf s}^{v, k}}\}_{v,k}$ are not the same,
then $d_{{\bf s}} = \max_{v, k}\{\,d_{{\bf s}^{v, k}}\}$.
 In the case when all the numbers $\{d_{{\bf s}^{v, k}}\}_{v,k}$ are the same,
 then if this common value $d_{{\bf s}^{v, k}}$ equals $e_{{\bf s}}$, then $d_{{\bf s}} =
e_{{\bf s}}  =d_{{\bf s}^{v, k}}$;  otherwise $d_{{\bf s}} = d_{{\bf s}^{v, k}}+1$.

(3) There is a a sequence ${\bf s}_1, \cdots {\bf s}_N$, such, that ${\bf s}_1 = 0$ and ${\bf s}_{i+1} = {\bf s}_i^{v_i, k_i}$ for some vertex $v_i$ and $1 \leq k_i \leq a_{v_i}$, such that $ d_{{\bf s}_i} = d_{{\bf s}_{i+1}} + 1$ for
all $1 \leq i \leq N-1$, and $d_{{\bf s}_N} = d_{{\bf s}_N^{v, k}}$ for all $v, k$, which means $d_{{\bf s}_N} = e_{{\bf s}_N}$.
\end{theorem}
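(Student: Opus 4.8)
The plan is to translate every image‑dimension into cohomology and then study the effect of a single blow‑up of a generic point on the current tip. First I would record the dictionary from \cite{NNA1}: for any level ${\bf s}$ and a generic $\calL\in\im(c^{l'_{\bf s}}(Z_{\bf s}))$ one has $d_{\bf s}=h^1(\calO_{Z_{\bf s}})-h^1(Z_{\bf s},\calL)$, because the tangent map of the Abel map at a generic divisor $D$ with $\calO_{Z_{\bf s}}(D)=\calL$ has image of dimension $h^1(\calO_{Z_{\bf s}})-h^1(Z_{\bf s},\calL)$. Combined with $h^1(\calO_{Z_{\bf s}})=h^1(\calO_Z)$ (Leray) and the form description $e_{\bf s}=h^1(\calO_Z)-\dim\Omega_{Z_{\bf s}}(I_{\bf s})$ already recalled in the excerpt, this reduces the theorem to controlling, under one extra blow‑up $\sigma\colon\tX_{{\bf s}^{v,k}}\to\tX_{\bf s}$ of a generic point $q$ on the tip $F_{v,k,{\bf s}_{v,k}}$, the minimal $h^1$ of a generic line bundle in the image and comparing it with the purely form‑theoretic $\dim\Omega$.

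For part (1) the two inequalities to prove are $d_{\bf s}\ge d_{{\bf s}^{v,k}}$ and $d_{\bf s}\le d_{{\bf s}^{v,k}}+1$. I would extract both from $\sigma$. The passage ${\bf s}\to{\bf s}^{v,k}$ changes only the single dual direction $F^*_{v,k,{\bf s}_{v,k}}\mapsto F^*_{v,k,{\bf s}_{v,k}+1}$, and pushing divisors forward identifies $\eca^{l'_{{\bf s}^{v,k}}}(Z_{{\bf s}^{v,k}})$ with the sublocus of $\eca^{l'_{\bf s}}(Z_{\bf s})$ of divisors through the generic centre $q$; the only new degree of freedom is a single point on the new $(-1)$‑curve $F_{v,k,{\bf s}_{v,k}+1}$. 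Feeding this into the integral (Laufer‑dual) presentation of the tangent map in \cite{NNA1} shows the image dimension can change by at most one (giving $d_{\bf s}\le d_{{\bf s}^{v,k}}+1$), while the fact that the generic image‑$h^1$ does not decrease under blow‑up (matching the monotonicity of $e$ recalled in the excerpt) gives $d_{\bf s}\ge d_{{\bf s}^{v,k}}$. Hence $d_{\bf s}-d_{{\bf s}^{v,k}}\in\{0,1\}$.

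For part (2) I would first note that the first assertion is a direct consequence of part (1): each $d_{{\bf s}^{v,k}}$ lies in $\{d_{\bf s}-1,d_{\bf s}\}$, so if the $d_{{\bf s}^{v,k}}$ are not all equal then both values occur, whence $\max_{v,k}d_{{\bf s}^{v,k}}=d_{\bf s}$. The genuine content is the all‑equal case $d_{{\bf s}^{v,k}}\equiv c$, where part (1) only yields $c\le d_{\bf s}\le c+1$. If $c=e_{\bf s}$, then $d_{\bf s}\le e_{\bf s}=c$ forces $d_{\bf s}=c=e_{\bf s}$, so here I only use the standard inequality $d_{\bf s}\le e_{\bf s}$. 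The hard direction is $c<e_{\bf s}\Rightarrow d_{\bf s}=c+1$, i.e.\ that the image genuinely grows at level ${\bf s}$. By Laufer duality the gap $e_{\bf s}-d_{\bf s}$ reflects the differential forms in $\Omega_{Z_{\bf s}}(I_{\bf s})$ still carrying a pole along the tips of $I_{\bf s}$; when $c<e_{\bf s}$ such a form survives, and I would translate it, through the integral presentation of the tangent map in \cite{NNA1}, into a tangent vector of $\pic^{l'_{\bf s}}(Z_{\bf s})$ that is in the image at level ${\bf s}$ but not at level ${\bf s}^{v,k}$, forcing $d_{\bf s}>c$ and hence $d_{\bf s}=c+1$. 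Making this growth step precise — and in particular the independence of the pole conditions along distinct tips — is where I expect the main difficulty.

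Finally, part (3) is a formal descent from parts (1) and (2). Starting at ${\bf s}_1={\bf 0}$, as long as the current level is not stable (some $d_{{\bf s}_i^{v,k}}<d_{{\bf s}_i}$) I would choose such a direction and set ${\bf s}_{i+1}={\bf s}_i^{v,k}$, so that $d_{{\bf s}_i}=d_{{\bf s}_{i+1}}+1$ by part (1). Since $d_{\bf s}\ge 0$ strictly decreases along this chain and $d_{\bf m}=e_{\bf m}=0$, the process terminates at some ${\bf s}_N$ with $d_{{\bf s}_N}=d_{{\bf s}_N^{v,k}}$ for all $(v,k)$; by the all‑equal clause of part (2) this stable value must equal $e_{{\bf s}_N}$, i.e.\ $d_{{\bf s}_N}=e_{{\bf s}_N}$, as claimed. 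Thus the whole burden of the argument sits in the one‑blow‑up analysis of part (1) and in the growth statement of part (2), the latter being the genuine obstacle.
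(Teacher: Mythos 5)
You should know at the outset that the paper you were given does not prove this theorem at all: it is recalled verbatim from \cite{NNAD} as an external ingredient for the proof of the theorem in Section 5 (the relative version of the dimension formula). So there is no in-paper proof to compare your attempt against, and I can only assess it on its own merits. On that score, your formal skeleton is correct: the dictionary $d_{{\bf s}}=h^1(\calO_{Z_{{\bf s}}})-h^1(Z_{{\bf s}},\calL)$ for $\calL$ generic in $\im(c^{l'_{{\bf s}}}(Z_{{\bf s}}))$, the identification of level--${\bf s}^{v,k}$ divisors with level--${\bf s}$ divisors passing through the generic centre $q$, the deduction of the first clause of (2) and of the case $d_{{\bf s}^{v,k}}=e_{{\bf s}}$ from (1) together with $d_{{\bf s}}\leq e_{{\bf s}}$, and the derivation of (3) by descent from (1) and (2) are all sound; moreover the containment of the constrained image in the full image does give the easy inequality $d_{{\bf s}^{v,k}}\leq d_{{\bf s}}$.

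The problem is that the two steps carrying the entire content of the theorem are asserted rather than proved, and for one of them the argument you indicate would fail as stated. For $d_{{\bf s}}\leq d_{{\bf s}^{v,k}}+1$ you reason that imposing the single condition ``pass through $q$'' can cut the image dimension by at most one. No naive dimension count gives this: the fibres of $c^{l'_{{\bf s}}}(Z_{{\bf s}})$ are (open subsets of) projectivized section spaces $\bP(H^0(Z_{{\bf s}},\calL))$, whose dimension jumps up exactly on the special strata of the image, and the image of a hypersurface under a morphism with jumping fibres can drop by more than one (already $(x,y)\mapsto (x,xy)$ on $\bC^2$ with the hypersurface $\{x=0\}$ shows this). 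What must be excluded is precisely that the constrained image lies entirely in a deep $h^0$--jumping stratum, and this requires genuine input --- semicontinuity combined with the structure of the fibres, or the formula of Theorem \ref{dim} --- not merely an appeal to the integral presentation of the tangent map. Second, you explicitly leave open the case of (2) in which all $d_{{\bf s}^{v,k}}$ agree and are smaller than $e_{{\bf s}}$, i.e.\ the claim that stability of the dimension under imposing a generic point on every tip forces the image to fill its affine hull so that $d_{{\bf s}}=e_{{\bf s}}$. Flagging it as ``the main difficulty'' is accurate, but it is exactly the non-formal content of the theorem; together with the upper bound in (1) it is where Laufer duality and the dimension formula must actually be put to work. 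As it stands, your text is a plausible roadmap whose hard steps remain unproven, so it cannot be accepted as a proof.
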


Now let's return to the proof of the theorem, we will prove the statement by a downgoing  induction on $i$, where $0 \leq i \leq N$.
If $i = N$, then we have $d_{{\bf s}_N} = e_{{\bf s}_N}$, which means, that $h^1(Z_{{\bf s}_N}) - d_{{\bf s}_N} = h^1(Z_{{\bf s}_N}) -  e_{Z, {\bf s}_N} = h^1( (Z_{{\bf s}_N})_{\calv \setminus I})$, where 
$I = |l'_{{\bf s}_N}|$.

Now let's have $(Z_{{\bf s}_N})_{\calv \setminus I} = \sum_{1 \leq i \leq n} Z'_i$, where $|Z'_i|$ are the connected components of $|(Z_{{\bf s}_N})_{\calv \setminus I} |$, now we have 
$h^1(Z_{{\bf s}_N}) - d_{{\bf s}_N} = \sum_{1 \leq i \leq n} h^1(Z'_i)$.

Notice, that by Theorem \ref{rell} from \cite{R} about cohomology of natural line bundles on relatively generic singularities, we easily get:
\begin{equation}
h^1(Z'_i) = T(Z'_i, l'_{{\bf s}_N}) = g(Z'_i, l'_{{\bf s}_N}) + D(Z'_i, l'_{{\bf s}_N}).
\end{equation}
So it means, that:

\begin{equation}
h^1(Z_{{\bf s}_N}) - d_{{\bf s}_N} =  \sum_{1 \leq i \leq n} T(Z'_i, l'_{{\bf s}_N}) \leq b_{Z_{{\bf s}_N}, l'_{{\bf s}_N}}.
\end{equation}
On the other hand we always have $h^1(Z_{{\bf s}_N}) - d_{{\bf s}_N} \geq b_{Z_{{\bf s}_N}, l'_{{\bf s}_N}}$, so we have $h^1(Z_{{\bf s}_N}) - d_{{\bf s}_N} = b_{Z_{{\bf s}_N}, l'_{{\bf s}_N}}$, which means, that the case $i = N$ is proved.

In the following we will prove, that $ b_{Z_{{\bf s}_{i+1}}, l'_{{\bf s}_{i+1}}} \leq b_{Z_{{\bf s}_i}, l'_{{\bf s}_i}} + 1$:

Let the new vertex of $\mathcal{T}_{{\bf s}_{i+1}}$ be $u$ and its neighbour $w$, and assume, that $b_{Z_{{\bf s}_{i+1}}, l'_{{\bf s}_{i+1}}} = \sum_{1 \leq j \leq n} T(Z'_j, l'_{{\bf s}_{i+1}})$, and if there is a component $Z'_j$, such that $w \in |Z'_j|$, then $j = 1$.

Now if there is no component $Z'_j$, such that $w \in |Z'_j|$, then the statement is trivial, since in this case $T(Z'_j, l'_{{\bf s}_{i+1}}) = T(Z'_j, l'_{{\bf s}_{i}})$ for all components which are not the one element $u$, and there can be a component
$Z'_j$, such that $| Z'_j| = u$, but then simply we have $T(Z'_j, l'_{{\bf s}_{i+1}}) = 0$, so in this case we get $ b_{Z_{{\bf s}_{i+1}}, l'_{{\bf s}_{i+1}}} = b_{Z_{{\bf s}_i}, l'_{{\bf s}_i}}$.

So we can assume, that $ w \in |Z'_1|$, now similarly we get $T(Z'_j, l'_{{\bf s}_{i+1}}) = T(Z'_j, l'_{{\bf s}_{i}})$ for all $ j > 1$.

Let's have $Z'_1 = a E_u + b E_w + A$, where $|A|$ doesn't contain $u, w$.

We claim, that $T(Z'_1, l'_{{\bf s}_{i+1}}) \leq T(A + b E_w, l'_{{\bf s}_{i}}) + 1$, where on the right hand side we take the cycle $A + b E_w$ on $\tX_{{\bf s}_{i}}$.

First notice, that we can assume, that the coefficient $a$ equals $(Z_{{\bf s}_{i+1}})_u$, because if we increase $a$, the value $T(Z'_1, l'_{s_{i+1}})$ can only increase.

Now, let's have a generic line bundle $\calL_1 \in \im( c^{ R(l'_{{\bf s}_{i}})} ((A + b E_w)_1))$ and a generic line bundle $\calL \in r^{-1}(\calL_1)$, now we have $h^1(A + b E_w, \calL) = g(A + b E_w, l'_{{\bf s}_{i}})$.

In the case $w$ is a vertex of the base singularity, then we can also assume, that $\calL_1$ has got a section whose divisor is a generic divisor $D$ in $\eca^{R(l'_{{\bf s}_{i}})}((A + b E_w)_1)$ and we blow up $E_w$ in $1$
intersection point of $E_w$ and $D$ to get the new exceptional divisor $E_v$.

We have the pullback maps $\pi^{*}_1 : \pic^{R(l'_{{\bf s}_{i}})}((A + b E_w)_1)  \to \pic^{ \pi^*(R(l'_{{\bf s}_{i}}) )}( (Z'_1)_1)$
and $\pi^{*}_2 : \pic^{R(l'_{{\bf s}_{i}})}(A + b E_w)  \to \pic^{ \pi^*(R(l'_{{\bf s}_{i}}) )}(Z'_1)$ which are both isomorphisms, since by the Leray spectral sequences
we have $h^1(A + b E_w) = h^1(Z'_1)$ and $h^1((A + b E_w)_1) = h^1( (Z'_1)_1)$.

Now notice, that $\pi_1^*(\calL) - E_u |  (Z'_1)_1$ is a generic line bundle in $\im \left(c^{ R(l'_{{\bf s}_{i+1}})} ((Z'_1)_1)  \right)$, and $\pi_2^*(\calL) - E_u$ is a generic line bundle
in $r^{-1}( \pi_1^*(\calL) - E_u |  (Z'_1)_1)$, so it means, that $g(Z'_1, l'_{{\bf s}_{i+1}}) = h^1(Z'_1 , \pi_2^*(\calL) - E_u)$.

We can easily see, that $  h^1(Z'_1 , \pi_2^*(\calL) - E_u) \leq h^1(A + b E_w, \calL) + 1$, which means at least, that $g(A + b E_w, l'_{{\bf s}_{i}}) + 1 \geq g(Z'_1, l'_{{\bf s}_{i+1}})$.

Indeed we have the exact sequence:

\begin{equation*}
H^0(E_u, \pi_2^*(\calL) ) \to H^1(Z'_1 , \pi_2^*(\calL) - E_u) \to H^1(Z'_1  + E_u, \pi_2^*(\calL)) \to 0,
\end{equation*}
which using the facts, that $h^0(E_u, \pi_2^*(\calL) ) = 1$ and $h^1(Z'_1  + E_u, \pi_2^*(\calL)) =  h^1(A + b E_w, \calL)$ yields the statement.

Now, if $D(A + b E_w, l'_{{\bf s}_{i}}) = 1$, then we are done, so we can assume, that $D(A + b E_w, l'_{{\bf s}_{i}}) = 0$.
This means, that $(l'_{{\bf s}_i}, \calL_{1})$ is relative dominant on $A + b E_w$, where $\calL_{1}$ is a generic line bundle in $\im(c^{R(l'_{{\bf s}_i})}((A + b E_w)_1))$, which means, that:
\begin{equation*}
T(A + b E_w, l'_{{\bf s}_{i}})= g(A + b E_w, l'_{s_{i}}) = h^1(A + b E_w) - d_{A + b E_w, l'_{{\bf s}_{i}}}.
\end{equation*}

On the other hand we have:

\begin{equation*}
h^1(Z_{{\bf s}_{i+1}}) - d_{{\bf s}_{i+1}} =  b_{Z_{{\bf s}_{i+1}}, l'_{{\bf s}_{i+1}}}  =  \sum_{1 \leq j \leq n} T(Z'_j, l'_{{\bf s}_{i+1}}) \leq \sum_{1 \leq j \leq n} \left(  h^1(Z'_j) - d_{Z'_j, l'_{{\bf s}_{i+1}}} \right)
\end{equation*}

However we know, that $h^1(Z_{{\bf s}_{i+1}}) - d_{{\bf s}_{i+1}} \geq \sum_{1 \leq j \leq n} \left(  h^1(Z'_j) - d_{Z'_j, l'_{{\bf s}_{i+1}}} \right)$, so we get, that:

\begin{equation*}
 h^1(Z'_1) - d_{Z'_1, l'_{{\bf s}_{i+1}}} = T(Z'_1, l'_{{\bf s}_{i+1}}).
\end{equation*}

So now we have $T(Z'_1, l'_{{\bf s}_{i+1}}) - T(A + b E_w, l'_{{\bf s}_{i}}) =  d_{A + b E_w, l'_{{\bf s}_{i}}} - d_{Z'_1, l'_{{\bf s}_{i+1}}} \leq 1$, so we have proved, that $ b_{Z_{{\bf s}_{i+1}}, l'_{{\bf s}_{i+1}}} \leq b_{Z_{{\bf s}_i}, l'_{{\bf s}_i}} + 1$.

Now, by induction we know, that $b_{Z_{{\bf s}_{i+1}}, l'_{{\bf s}_{i+1}}} = h^1 (Z_{{\bf s}_{i+1}}) - d_{{\bf s}_{i+1}}$ and $ h^1 (Z_{{\bf s}_{i}}) - d_{{\bf s}_{i}} + 1 = h^1 (Z_{{\bf s}_{i+1}}) - d_{{\bf s}_{i+1}}$, which means, that:

\begin{equation}
 b_{Z_{{\bf s}_i}, l'_{s_i}} \geq h^1 (Z_{{\bf s}_{i}}) - d_{{\bf s}_{i}}.
\end{equation}

However by part 1) we always have $b_{Z_{{\bf s}_i}, l'_{{\bf s}_i}} \leq h^1 (Z_{{\bf s}_{i}}) - d_{{\bf s}_{i}}$, which means $b_{Z_{{\bf s}_i}, l'_{{\bf s}_i}} = h^1 (Z_{{\bf s}_{i}}) - d_{{\bf s}_{i}}$, and we are done.
\end{proof}


\begin{thebibliography}{30}



\bibitem[A62]{Artin62} Artin, M.:
Some numerical criteria for contractibility of curves on algebraic surfaces.
{\em  Amer. J. of Math.}, {\bf 84}, 485-496, 1962.

\bibitem[A66]{Artin66} Artin, M.:
On isolated rational singularities of surfaces.
{\em Amer. J. of Math.}, {\bf 88}, 129-136, 1966.



\bibitem[BN10]{BN} Braun, G. and N\'emethi, A.:
Surgery formula for Seiberg--Witten invariants of negative definite plumbed 3--manifolds,
{ \em J. f\"ur die reine und ang. Math.} {\bf 638} (2010), 189--208.

\bibitem[BN07]{BNnewt} Braun, G. and N\'emethi, A.:
Invariants of Newton non-degenerate surface singularities, {\em Compositio Math.} {\bf 143} (2007), 1003--1036.

\bibitem[BV99]{BV} Brion, M. and Vergne, M.: Arrangement of hyperplanes. I: Rational functions and the Jeffrey--Kirwan residue,
{\em Ann. Sci. l’École Norm. Sup.} {\bf 32} (1999), no. 5, 715--741.

\bibitem[CDGZ04]{CDGPs} Campillo, A.,  Delgado, F. and Gusein-Zade, S. M.:
Poincar\'e series of a rational surface singularity, {\em Invent. Math.} {\bf 155} (2004),
no. 1, 41--53.

\bibitem[CDGZ08]{CDGEq}  Campillo, A.,  Delgado, F. and Gusein-Zade, S. M.:
Universal abelian covers of rational
surface singularities and multi-index filtrations,
{\em Funk. Anal. i Prilozhen.} {\bf 42} (2008), no. 2, 3--10.

\bibitem[CHR03]{CHR} Cutkosky, S. D.,  Herzog, J. and Reguera, A.:
Poincar\'e series of resolutions of surface singularities,
{\em Trans. of the AMS} {\bf 356} (2003), no. 5, 1833--1874.

\bibitem[EN85]{EN} Eisenbud, D. and Neumann, W.: Three--dimensional link theory and invariants of plane curve singularities,
{\em Princeton Univ. Press} (1985).

\bibitem[GS99]{GS} Gompf, R.E. and Stipsicz, A.:  An introduction to
$4$--manifolds and Kirby calculus, {\em Graduate Studies in
Mathematics} {\bf 20} (1999), Amer. Math. Soc.

\bibitem[GR70]{GR} Grauert, H., Remmert, R.:
Coherent analytic sheaves, Grundlehren der mathematischen Wissenschaften, 1984 Springer ??????

\bibitem[GrRie70]{GrRie} Grauert, H. and Riemenschneider, O.: Verschwindungss\"atze f\"ur analytische
kohomologiegruppen auf komplexen R\"aumen, {\it Inventiones math.} {\bf 11} (1970), 263-292.



\bibitem[Ha77]{hartshorne}  Hartshorne, R.:  Algebraic Geometry, Graduate Texts
in Mathematics {\bf 52}, Springer-Verlag, 1977.




\bibitem[L13]{LPhd} L\'aszl\'o, T.: Lattice cohomology and Seiberg--Witten invariants of normal surface singularities, PhD. thesis,
Central European University, Budapest, 2013.

\bibitem[LN14]{LN} L\'aszl\'o, T. and N\'emethi, A.: Ehrhart theory of polytopes and Seiberg-Witten invariants of plumbed 3--manifolds,
{\em Geometry and Topology} {\bf 18} (2014), no. 2, 717--778.

\bibitem[LN15]{LNRed} L\'aszl\'o, T. and N\'emethi, A.: Reduction theorem for lattice cohomology,
{\em Int Math Res Notices} {\bf 11} (2015), 2938--2985.

\bibitem[LNN14]{LNN} L\'aszl\'o, T., Nagy, J.  and N\'emethi, A.: Surgery formulae for the Seiberg--Witten invariant
of plumbed 3--manifolds

\bibitem[La72]{Laufer72} Laufer, H.B.: On rational singularities,
{\em Amer. J. of Math.}, {\bf 94}, 597-608, 1972.

\bibitem[Les96]{Lescop} Lescop, C.: Global surgery formula for the Casson--Walker
invariant, {\em Ann. of Math. Studies}  {\bf 140}, Princeton Univ. Press, 1996.

\bibitem[Lim00]{Lim} Lim, Y.: Seiberg--Witten invariants for 3--manifolds in the case $b_1=0$ or $1$,
{\em Pacific J. of Math.} {\bf 195} (2000), no. 1, 179--204.

\bibitem[Li69]{Lipman} Lipman, J.: Rational singularities, with applications to algebraic surfaces
and unique factorization, Inst. Hautes \'Etudes Sci. Publ. Math. {\bf 36} (1969), 195-279.






\bibitem[Kl??]{Kl} Kleiman, St. L.: The Picard scheme,


\bibitem[KN17]{KN} Koll\'ar, J. and N\'emethi, A.:
 Durfee's conjecture on the signature of smoothings of surface singularities,
(an appendix by T. de Fernex), arXiv:1411.1039.
 to appear in {\it Annales Scient. de l'Ecole Norm. Sup.}



\bibitem[N99]{weakly} N\'emethi, A.: ``Weakly'' Elliptic Gorenstein singularities of surfaces,
{\em Inventiones math.},  {\bf 137}, 145-167 (1999).


\bibitem[N99b]{Nfive} N\'emethi, A.: Five lectures on normal surface singularities,
lectures at the Summer School in {\em Low dimensional topology} Budapest,
Hungary, 1998; Bolyai Society Math. Studies {\bf 8} (1999), 269--351.

\bibitem[N05]{NOSZ} N\'emethi, A.: On the Ozsv\'ath--Szab\'o invariant of negative
definite plumbed 3--manifolds,
{\em Geometry and Topology} {\bf 9} (2005), 991--1042.

\bibitem[N07]{trieste} N\'emethi, A.: Graded roots and singularities,
{\em Singularities in geometry and topology},  World
Sci. Publ., Hackensack, NJ (2007), 394--463.

\bibitem[N08]{NPS} N\'emethi, A.: Poincar\'e series associated with surface singularities, in Singularities I, 271--297,
{\em Contemp. Math.} {\bf 474}, Amer. Math. Soc., Providence RI, 2008.

\bibitem[NO09]{NOk} N\'emethi, A. and Okuma, T.:
On the Casson invariant conjecture of Neumann--Wahl, {\em Journal of Algebraic Geometry}
{\bf 18} (2009), 135--149.

\bibitem[N12]{NCL} N\'emethi, A.: The cohomology of line bundles of splice--quotient singularities,
{\em Advances in Math.} {\bf 229} 4 (2012), 2503--2524.

\bibitem[N11]{NJEMS} N\'emethi, A.: The Seiberg--Witten invariants of negative definite plumbed 3--manifolds,
{\em J. Eur. Math. Soc.} {\bf 13} (2011), 959--974.

\bibitem[NN02]{NN1} N\'emethi, A. and Nicolaescu, L.I.:
 Seiberg--Witten invariants and surface singularities,
{\em Geometry and Topology} {\bf 6} (2002), 269--328.

\bibitem[NO09]{NOk} N\'emethi, A. and Okuma, T.:
On the Casson invariant conjecture of Neumann--Wahl, {\em Journal of Algebraic Geometry}
{\bf 18} (2009), 135--149.

\bibitem[NO17]{NO17} N\'emethi, A. and Okuma, T.:
Analytic singularities supported by a specific
integral homology sphere link,
to appear in the
Proceedings dedicated to H. Laufer's 70th birthday (Conference at Sanya, China).




\bibitem[NNI]{NNA1} Nagy, J., N\'emethi, A.:
The Abel map for surface singularities  I. Generalities and  examples,
arXiv 2018.


\bibitem[NNII]{NNA2} Nagy, J., N\'emethi, A.:
The Abel map for surface singularities  II. Generic analytic structure,
 arXiv:1809.03744.


 \bibitem[NN19a]{NNA3} Nagy, J., N\'emethi, A.:
The Abel map for surface singularities  II. Elliptic germs,  arXiv:1902.07493.

\bibitem[NR]{R} Nagy, J:
Invariants of relatively generic structures on normal surface singularities,
arXiv:1910.03275

 \bibitem[NND]{NNAD} Nagy, J., N\'emethi, A.:
The dimension of the image of the Abel map associated with normal surface singularities 
 arXiv:1909.07023



\bibitem[NW90]{NWCasson}
W. Neumann and J.~Wahl, Casson invariants of links of singularities,
Comment. Math. Helvetici {\bf 65} (1990), 58--78.

\bibitem[NW05]{NWsq}
Neumann, W. and Wahl, J.: Complete intersection singularities of splice type as universal abelian covers,
{\em Geom. Topol.} {\bf 9} (2005), 699--755.

\bibitem[NW10]{NWECTh}
W.~D. Neumann and J.~Wahl,
\emph{ The End Curve Theorem for normal complex surface singularities},  J. Eur. Math. Soc. {\bf 12} (2010), 471--503.


\bibitem[Nic04]{Nic04} Nicolaescu, L.: Seiberg--Witten invariants of rational homology $3$--spheres,
{\em Comm. in Cont. Math.} {\bf 6} no. 6 (2004), 833--866.



\bibitem[O04]{OkumaRat} Okuma, T.: Universal abelian covers of rational surface singularities,
{\it Journal of London Mathematical Society} {\bf 70}(2) (2004), 307-324.

\bibitem[O08]{Ok} Okuma, T.: The geometric genus of splice--quotient singularities,
{\em Trans. Amer. Math. Soc.} {\bf 360} 12 (2008), 6643--6659.

\bibitem[O10]{OECTh}
Okuma, T.:
\emph{Another proof of the end curve theorem for normal surface singularities},
J. Math. Soc. Japan {\bf 62}, Number 1 (2010), 1--11.

\bibitem[OWY14]{OWY14} Okuma, T.,   Watanabe Kei-ichi,  Yoshida Ken-ichi:
Good ideals and $p_g$--ideals in two-dimensional normal singularities,
arXiv:1407.1590.

\bibitem[OWY15a]{OWY15a} Okuma, T.,   Watanabe Kei-ichi,  Yoshida Ken-ichi:
Rees algebras and $p_g$--ideals in a two-dimensional normal local domain,
 arXiv:1511.00827.

 \bibitem[OWY15b]{OWY15b} Okuma, T.,   Watanabe Kei-ichi,  Yoshida Ken-ichi:
A characterization of two-dimensional rational singularities via Core of ideals,
arXiv:1511.01553.




\bibitem[Ra72]{Ram} Ramanujam, C.P.: Remarks on Kodaira vanishing theorem, {\it J. Indian Math. Soc.} {\bf 36}
(1972), 41-51.


\bibitem[Re97]{MR}  Reid, M.: Chapters on Algebraic Surfaces.
In: Complex Algebraic Geometry,
IAS/Park City Mathematical Series,  Volume {\bf 3}  (J. Koll\'ar editor),
3-159, 1997.

\bibitem[SzV03]{SzV} Szenes, A. and Vergne, M.: Residue formulae for vector partitions and Euler--Maclaurin sums,
{\em Advances in Appl. Math.} {\bf 30} (2003), 295--342.


\bibitem[Y79]{Yau5} Yau, S. S.-T.: On strongly elliptic singularities,
{\em Amer. J. of Math.}, {\bf 101} (1979), 855-884.

\bibitem[Y80]{Yau1} Yau, S. S.-T.: On maximally elliptic singularities,
{\em Transactions of the AMS}, {\bf 257} Number 2 (1980), 269-329.

\end{thebibliography}
\end{document}